\begin{document}
	
\title{New classes of permutation polynomials with coefficients 1 over finite fields}

\author{Hutao Song\inst{1} \and
Hua Guo\inst{1,2} \and
Xiyong Zhang\inst{3} \and
Yapeng Wu\inst{1} \and
Jianwei Liu\inst{1}}

\authorrunning{Hutao Song et al.}

\institute{School of Cyber Science and Technology, Beihang University, Beijing, 100191, China \and
State Key Laboratory of Cryptology, P.O. Box 5159, Beijing, 100878, China \and
Beijing Institute of Satellite Information Engineering, Beijing, 100086, China
}

\maketitle  
         
\begin{abstract}
	
Permutation polynomials with coefficients 1 over finite fields attract researchers' interests due to their simple algebraic form. In this paper, we first construct four classes of fractional permutation polynomials over the cyclic subgroup of $ \mathbb{F}_{2^{2m}} $. From these permutation polynomials, three new classes of permutation polynomials with coefficients 1 over $ \mathbb{F}_{2^{2m}} $ are constructed, and three more general new classes of permutation polynomials with coefficients 1 over $ \mathbb{F}_{2^{2m}} $ are constructed using a new method we presented recently. Some known permutation polynomials are the special cases of our new permutation polynomials. Furthermore, we prove that, in all new permutation polynomials, there exists a permutation polynomial which is EA-inequivalent to known permutation polynomials for all even positive integer $ m $. This proof shows that EA-inequivalent permutation polynomials over $ \mathbb{F}_{q} $ can be constructed from EA-equivalent permutation polynomials over the cyclic subgroup of $ \mathbb{F}_{q} $. From this proof, it is obvious that, in all new permutation polynomials, there exists a permutation polynomial of which algebraic degree is the maximum algebraic degree of permutation polynomials over $ \mathbb{F}_{2^{2m}} $.

\keywords{Finite field  \and Permutation polynomial \and Fractional polynomial.}

\end{abstract}

\section{Introduction}\label{intro}

Let $ q $ be a prime power, $ \mathbb{F}_{q} $ be the finite field with $ q $ elements, and $ f(x) $ be a polynomial over $ \mathbb{F}_{q} $. $ f(x) $ is a permutation polynomial over $ \mathbb{F}_{q} $ if $ f : x \mapsto f(x) $ is a permutation over $ \mathbb{F}_{q} $. For a positive integer $ d $, the cyclic subgroup $ \mu_{d}=\{x \in \overline{\mathbb{F}}_{q}: x^{d}=1\} $, where $ \overline{\mathbb{F}}_{q} $ is the algebraic closure of $\mathbb{F}_{q} $. Especially when $ q=2^{2m} $ for a positive integer $ m $ and $ d=2^{m}+1 $, $ \mu_{d} $ is denoted by $ U $. Permutation polynomials over finite fields have wide applications in areas of mathematics and engineering such as combinatorial designs \cite{Ref2}, coding theory \cite{Ref1,Ref4} and cryptography \cite{Ref8,Ref9}. Permutation polynomials with coefficients 1 have the simplest algebraic form, which can be used in cryptographic algorithms (such as multivariable public key cryptographic algorithm) to improve the efficiency. In recent years, permutation polynomials with coefficients 1 over finite fields attract researchers' interests \cite{Ref3,Ref5,Ref6,Ref7,Ref13,Ref14}.

Wan and Lidl \cite{Ref18} studied permutation polynomials of the form $ x^{r}f(x^{\frac{q-1}{d}}) $, and Zieve \cite{Ref16} presented a criterion for permutation polynomials of this form. In 2013, Tu et al. \cite{Ref10} studied permutation polynomials of the form $ \sum_{i=1}^{t}u_{i}x^{d_{i}} $, where $ u_{i} \in \mathbb{F}_{2^{2m}} $ and $ d_{i} \equiv e \pmod {2^{m}-1} $ with a positive integer $ e $, and they presented a criterion for permutation polynomials of this form. In both of these two criterions, permutation polynomials over $ \mathbb{F}_{q} $ are related with permutation polynomials over $ \mu_{d} $, and this motivates us to construct new permutation polynomials over $ \mathbb{F}_{q} $ from permutation polynomials over $ \mu_{d} $.

Many researchers studied permutation polynomials over $ \mu_{d} $ in recent years \cite{Ref3,Ref6,Ref7,Ref13,Ref14}, and these polynomials usually have fractional form. Li et al. \cite{Ref6} proved that $ \frac{x^{2^{k}+1}+x^{2^{k}}+1}{x^{2^{k}+1}+x+1} $ and $ \frac{x^{2^{k}+1}+x^{2^{k}}+x}{x^{2^{k}}+x+1} $ with a positive integer $ k $ are permutation polynomials over $ U $, and this motivates us to construct large classes of permutation polynomials.

In this paper, we first construct four classes of fractional permutation polynomials over $ U $. From these fractional permutation polynomials, three new classes of permutation polynomials with coefficients 1 over $ \mathbb{F}_{2^{2m}} $ are constructed by the criterion in \cite{Ref16}. Recently, we \cite{Ref20} presented a new method based on exponential sum and polar decomposition to construct permutation polynomials over $ \mathbb{F}_{q} $. By using this new method, three more general new classes of permutation polynomials with coefficients 1 over $ \mathbb{F}_{2^{2m}} $ are constructed from the four classes of fractional permutation polynomials mentioned above. Some known classes of permutation polynomials in \cite{Ref3,Ref6,Ref7,Ref13,Ref14} are the special cases of our new permutation polynomials.

Furthermore, by using the property that EA-equivalent nonconstant functions have the same algebraic degree, we prove that, in all new permutation polynomials, there exists a permutation polynomial which is EA-inequivalent to known permutation polynomials for all even positive integer $ m $. This proof shows that EA-inequivalent permutation polynomials over $ \mathbb{F}_{2^{2m}} $ can be constructed from EA-equivalent permutation polynomials over $ U $. Moreover, from this proof, it is obvious that, in all new permutation polynomials, there exists a permutation polynomial with algebraic degree $ 2m-1 $, which is the maximum algebraic degree of permutation polynomials over $ \mathbb{F}_{2^{2m}} $. Permutation polynomial with high alebraic degree can be used in cryptographic scheme to improve the security.

This paper is organized as follows. In Section \ref{sec:2}, some basic notations and lemmas are introduced. Section \ref{sec:3} constructs new classes of permutation polynomials with coefficients 1 over $ \mathbb{F}_{2^{2m}} $ from fractional permutation polynomials over $ U $, which is the main result of this paper. In Section \ref{sec:4}, we prove that, in all new permutation polynomials, there exists a permutation polynomial which is EA-inequivalent to known permutation polynomials for all even positive integer $ m $. Section \ref{sec:5} is the conclusion.

\section{Preliminaries}\label{sec:2}

First some basic notations and definitions are introduced.

Let $ v_{2}(\cdot) $ be the 2-adic order function.

Let $ n=2m $ for a positive integer $ m $, the trace function from $ \mathbb{F}_{2^n} $ to $ \mathbb{F}_{2} $ denoted by $ Tr_{1}^{n}(\cdot) $ is defined as $$ Tr_{1}^{n}(x)=x+x^{2}+x^{2^2}+\cdots+x^{2^{n-1}}. $$

\begin{definition}\label{def 3}{\rm\cite{Ref15}}
	Two functions $ F,G $ are called extended affine equivalent (EA-equivalent) if $ G=A_{1} \circ F \circ A_{2} + A $ for some affine permutations $ A_{1},A_{2} $ and affine function $ A $. 
\end{definition}

EA-equivalent nonconstant functions have the same algebraic degree. For any polynomial $ F(x)=\sum_{i=0}^{p^{n}-1}a_{i}x^{i} $ over $ \mathbb{F}_{p^n} $, the algebraic degree of $ F $ is defined as $ \deg(F)=\max_{0 \leq i \leq p^{n}-1}\left\{wt_{p}(i): a_{i} \neq 0\right\} $, where $ wt_{p}(i) $ is the $ p $-weight of $ i $ \cite{Ref19}. Especially when $ p=2 $, the $ p $-weight is Hamming weight.

The following three lemmas give two criterions for permutation polynomials, and both of them show that permutation polynomials over finite fields are related with permutation polynomials over $ \mu_{d} $.

\begin{lemma}\label{lemma 1}{\rm\cite{Ref16,Ref17}}	
	Pick $ d,r>0 $ with $ d|(q-1) $, and let $ h \in \mathbb{F}_{q}[x] $. Then $ f(x)=x^{r}h(x^{(q-1)/d}) $ permutes $ \mathbb{F}_{q} $ if and only if both
	
	(1) $ \gcd(r,(q-1)/d)=1 $ and
	
	(2) $ x^{r}h(x)^{(q-1)/d}$ permutes $ \mu_{d} $.
\end{lemma}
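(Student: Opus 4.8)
The plan is to exploit the factorization of the multiplicative group and reduce the permutation property of $f$ on $\mathbb{F}_q$ to a statement about an induced map on $\mu_d$. Write $s=(q-1)/d$, so that $q-1=ds$ and the power map $\pi:x\mapsto x^{s}$ carries $\mathbb{F}_q^{*}$ onto $\mu_d$ as an $s$-to-$1$ homomorphism whose fibers are exactly the cosets of $\mu_s=\{x:x^{s}=1\}$, each of size $s$. Since $r>0$ gives $f(0)=0$, I would first observe that $f$ permutes $\mathbb{F}_q$ if and only if it permutes $\mathbb{F}_q^{*}$. The central device is the commutative relation $\pi\circ f=g\circ\pi$ on $\mathbb{F}_q^{*}$, where $g(x)=x^{r}h(x)^{s}$ is the map in condition (2): indeed $f(x)^{s}=x^{rs}h(x^{s})^{s}=(x^{s})^{r}h(x^{s})^{s}=g(x^{s})$, and this single computation is the conceptual heart of the argument.

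For the forward direction, assume $f$ permutes $\mathbb{F}_q^{*}$. I would first rule out $h$ vanishing on $\mu_d$: if $h(y_0)=0$ for some $y_0\in\mu_d$, then $f$ is identically $0$ on the whole fiber $\pi^{-1}(y_0)$ (and equals $f(0)=0$), contradicting injectivity; hence $g$ maps $\mu_d$ into itself, since $h(y)^{sd}=h(y)^{q-1}=1$ forces $g(y)^{d}=1$. Then surjectivity of $\pi$ together with the commutative relation makes $g\circ\pi=\pi\circ f$ surjective, so $g$ is onto, hence a permutation of the finite set $\mu_d$; this is condition (2). For condition (1), I would restrict $f$ to a single fiber $\pi^{-1}(y_0)=x_0\mu_s$: there $x^{s}=y_0$ is constant, so $f(x)=h(y_0)\,x^{r}$ is a nonzero scalar times $x^{r}$, and its injectivity on the coset $x_0\mu_s$ forces $\omega\mapsto\omega^{r}$ to be injective on the cyclic group $\mu_s$, i.e. $\gcd(r,s)=1$.

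For the converse, assume (1) and (2). From (2) we have $g(\mu_d)\subseteq\mu_d$, so $h$ is nonvanishing on $\mu_d$ and the $f$-image of each fiber $\pi^{-1}(y_0)$ lands in the nonempty fiber $\pi^{-1}(g(y_0))$. On that fiber $f$ acts as $x\mapsto h(y_0)\,x^{r}$, which by $\gcd(r,s)=1$ is a bijection of $x_0\mu_s$ onto $\pi^{-1}(g(y_0))$, both of size $s$. Because $g$ permutes $\mu_d$, these fiberwise bijections assemble into a bijection of $\mathbb{F}_q^{*}$, whence $f$ permutes $\mathbb{F}_q$.

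I anticipate the main obstacle to be bookkeeping rather than a deep idea: making the fiber structure of $\pi$ fully rigorous (that the fibers are cosets of $\mu_s$ of uniform size $s$, and that $\pi$ is onto $\mu_d$), and carefully disposing of the degenerate cases where $h$ vanishes on $\mu_d$ or where $s=1$, so that the ``scaling by $x^{r}$ on each fiber'' picture remains valid in every case. Once the commutative relation and the fiberwise scaling description are in place, both implications follow by counting.
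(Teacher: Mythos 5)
Your proof is correct and complete. The paper itself gives no proof of this lemma---it is quoted from Zieve's papers [16,17]---and your argument (the commutation relation $f(x)^{s}=g(x^{s})$ with $s=(q-1)/d$, plus the fiberwise analysis of $f$ as scaling by $h(y_0)$ composed with $x\mapsto x^r$ on each coset of $\mu_s$) is exactly the standard argument given in those sources, with the degenerate cases ($h$ vanishing on $\mu_d$, the value $0$) handled properly.
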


\begin{lemma}\label{lemma 2}{\rm\cite{Ref10}}	
	For an integer $ t \geq 2 $ and a positive integer $ e $, $ f(x)=x^{d_{1}}+\sum_{i=2}^{t}u_{i}x^{d_{i}} $, where for each $ i $ with $ 1 \leq i \leq t $, the element $ u_{i} \in \mathbb{F}_{2^{n}} $ and $ d_{i} \equiv e \pmod {2^{m}-1} $. Let $ \gcd(d_{1},2^{n}-1)=1 $, the polynomial $ f $ is a permutation polynomial over $ \mathbb{F}_{2^{n}} $ if and only if for every $ \delta \in \mathbb{F}_{2^{n}}^{*} $, $$ \sum_{x \in \mathbb{F}_{2^{n}}}(-1)^{Tr_{1}^{n}\left(x^{d_{1}}+\sum_{i=2}^{t}u_{i}\delta^{d_{1}-d_{i}}x^{d_{i}}\right)}=0. $$
\end{lemma}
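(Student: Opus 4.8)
The plan is to reduce the statement to the classical additive-character criterion for permutation polynomials and then to exploit the hypothesis $\gcd(d_1,2^n-1)=1$ through a single monomial substitution. Recall that a map $f$ permutes $\mathbb{F}_{2^n}$ if and only if every nonzero additive character sums to zero over its image; concretely, $f$ is a permutation polynomial over $\mathbb{F}_{2^n}$ if and only if $\sum_{x \in \mathbb{F}_{2^n}}(-1)^{Tr_1^n(bf(x))}=0$ for every $b \in \mathbb{F}_{2^n}^{*}$. This equivalence is a direct consequence of orthogonality of characters: writing the sum as $\sum_{y}N(y)(-1)^{Tr_1^n(by)}$ with $N(y)=|f^{-1}(y)|$, the vanishing of all these sums for $b\neq 0$ is equivalent to $N(y)\equiv 1$, i.e. to $f$ being a bijection. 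I would take this as the starting point.

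The heart of the argument is to match the general character sum $\sum_x(-1)^{Tr_1^n(bf(x))}$ with the normalized sum appearing in the statement. Since $\gcd(d_1,2^n-1)=1$, the map $\delta \mapsto \delta^{d_1}$ is a bijection of $\mathbb{F}_{2^n}^{*}$, so every nonzero $b$ can be written uniquely as $b=\delta^{d_1}$ with $\delta \in \mathbb{F}_{2^n}^{*}$, and I let $\delta$ serve as the parameter. For fixed $\delta\neq 0$ the dilation $x \mapsto \delta^{-1}x$ is a bijection of $\mathbb{F}_{2^n}$, and under it $b x^{d_1}=\delta^{d_1}(\delta^{-1}x)^{d_1}=x^{d_1}$, while $b u_i x^{d_i}=\delta^{d_1}u_i(\delta^{-1}x)^{d_i}=u_i\delta^{d_1-d_i}x^{d_i}$. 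Hence $\sum_{x}(-1)^{Tr_1^n(bf(x))}=\sum_{x}(-1)^{Tr_1^n(x^{d_1}+\sum_{i=2}^{t}u_i\delta^{d_1-d_i}x^{d_i})}$, which is precisely the sum in the statement. Because $b$ ranges over $\mathbb{F}_{2^n}^{*}$ exactly when $\delta$ does, the criterion ``all character sums vanish'' translates verbatim into the stated condition ``the displayed sum vanishes for every $\delta\in\mathbb{F}_{2^n}^{*}$'', settling both directions at once.

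I expect no deep obstacle here; the real work is careful bookkeeping. The one point demanding attention is the consistent use of the two bijections (the $d_1$-th power map on $\mathbb{F}_{2^n}^{*}$ and the dilation $x\mapsto\delta^{-1}x$) and tracking the exponent with the correct sign, since it is the pairing of $b=\delta^{d_1}$ with dilation by $\delta^{-1}$ (rather than by $\delta$) that produces $\delta^{d_1-d_i}$ and not $\delta^{d_i-d_1}$. I would also note that the congruence hypothesis $d_i\equiv e\pmod{2^m-1}$ is not strictly needed for this equivalence as stated: its actual benefit is that $d_1-d_i$ is then a multiple of $2^m-1$, so $\delta^{d_1-d_i}=(\delta^{2^m-1})^{(d_1-d_i)/(2^m-1)}$ depends only on $\delta^{2^m-1}\in U$, which would let one replace the range $\delta\in\mathbb{F}_{2^n}^{*}$ by the far smaller set $U$ and thereby obtain a more practical test.
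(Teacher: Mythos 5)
Your proof is correct. Note that the paper itself offers no proof of this lemma --- it is quoted verbatim from the cited reference \cite{Ref10} --- so there is nothing internal to compare against; your argument (orthogonality of additive characters to reduce to the vanishing of $\sum_{x}(-1)^{Tr_{1}^{n}(bf(x))}$ for all $b\neq 0$, then the substitution $b=\delta^{d_{1}}$, $x\mapsto\delta^{-1}x$, which is legitimate precisely because $\gcd(d_{1},2^{n}-1)=1$ makes the $d_{1}$-th power map bijective on $\mathbb{F}_{2^{n}}^{*}$) is the standard route and is the one taken in that reference. Your side remark that the congruence $d_{i}\equiv e\pmod{2^{m}-1}$ is not needed for this equivalence itself, but only to make $\delta^{d_{1}-d_{i}}$ depend on $\delta$ through $\delta^{2^{m}-1}\in U$ (which is what Lemma 3 then exploits), is also accurate.
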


\begin{lemma}\label{lemma 3}{\rm\cite{Ref10}}
	Let $ \gcd(d_{1},2^{n}-1)=1 $ and $ d_{i} \equiv e \pmod {2^{m}-1} $ for each $ i $ with $ 1 \leq i \leq t $. For $ \omega_{2},\cdots,\omega_{t} \in \mathbb{F}_{2^{n}} $, the exponential sum $$ \sum_{x \in \mathbb{F}_{2^{n}}}(-1)^{Tr_{1}^{n}\left(x^{d_{1}}+\sum_{i=2}^{t}\omega_{i}x^{d_{i}}\right)}=(N(\omega_{2},\cdots,\omega_{t})-1)\cdot2^{m}, $$ where $ N(\omega_{2},\cdots,\omega_{t}) $ is the number of $ \lambda's $ in $ U $ such that $$ \lambda^{d_{1}}+\sum_{i=2}^{t}\omega_{i}\lambda^{d_{i}}+\left(\lambda^{d_{1}}+\sum_{i=2}^{t}\omega_{i}\lambda^{d_{i}}\right)^{2^m}=0. $$
\end{lemma}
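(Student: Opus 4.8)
The plan is to evaluate the exponential sum by exploiting the coprime factorization $2^{n}-1=(2^{m}-1)(2^{m}+1)$. Since $\gcd(2^{m}-1,2^{m}+1)=1$, every nonzero element of $\mathbb{F}_{2^{n}}$ factors uniquely as $x=y\lambda$ with $y\in\mathbb{F}_{2^{m}}^{*}$ and $\lambda\in U=\mu_{2^{m}+1}$ (a polar decomposition of $\mathbb{F}_{2^{n}}^{*}$). First I would isolate the $x=0$ term, which contributes $1$, and rewrite the remaining sum over $x\in\mathbb{F}_{2^{n}}^{*}$ as a double sum over $y$ and $\lambda$.

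Next I would use the congruence $d_{i}\equiv e\pmod{2^{m}-1}$: because $y^{2^{m}-1}=1$, we have $y^{d_{i}}=y^{e}$ for every $i$, so $x^{d_{1}}+\sum_{i=2}^{t}\omega_{i}x^{d_{i}}=y^{e}g(\lambda)$, where $g(\lambda)=\lambda^{d_{1}}+\sum_{i=2}^{t}\omega_{i}\lambda^{d_{i}}$. Writing $Tr_{1}^{n}=Tr_{1}^{m}\circ Tr_{m}^{n}$ for the relative trace $Tr_{m}^{n}(z)=z+z^{2^{m}}$, and using $y\in\mathbb{F}_{2^{m}}$ (so $y^{2^{m}}=y$), the inner trace collapses to $Tr_{1}^{m}\!\left(y^{e}c(\lambda)\right)$ with $c(\lambda)=g(\lambda)+g(\lambda)^{2^{m}}\in\mathbb{F}_{2^{m}}$. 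The condition $c(\lambda)=0$ is precisely the one counted by $N(\omega_{2},\dots,\omega_{t})$, which is the link I want to surface.

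The observation that makes the inner sum tractable is that $\gcd(d_{1},2^{n}-1)=1$ forces $\gcd(e,2^{m}-1)=1$, so $y\mapsto y^{e}$ permutes $\mathbb{F}_{2^{m}}^{*}$. Substituting $w=y^{e}$, the inner sum becomes $\sum_{w\in\mathbb{F}_{2^{m}}^{*}}(-1)^{Tr_{1}^{m}(w\,c(\lambda))}$, a standard additive character sum over $\mathbb{F}_{2^{m}}$: by orthogonality it equals $2^{m}-1$ when $c(\lambda)=0$ and $-1$ when $c(\lambda)\neq 0$. Finally I would count the $\lambda\in U$ in each case ($N$ of them give $c(\lambda)=0$, and $2^{m}+1-N$ give $c(\lambda)\neq0$) and combine: $1+N(2^{m}-1)-(2^{m}+1-N)=(N-1)2^{m}$, which is the claimed identity.

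I expect the main conceptual step to be the polar decomposition together with the factoring $x^{d_{i}}=y^{e}\lambda^{d_{i}}$; once the sum is separated into the subfield variable $y$ and the subgroup variable $\lambda$, the rest is routine character orthogonality and counting. The only point requiring care is verifying the two coprimality facts used, namely $\gcd(e,2^{m}-1)=1$ and the nontriviality of $w\mapsto Tr_{1}^{m}(w\,c(\lambda))$ whenever $c(\lambda)\neq0$; both follow directly from the hypothesis $\gcd(d_{1},2^{n}-1)=1$.
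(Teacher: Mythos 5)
Your argument is correct and complete: the paper itself gives no proof of this lemma (it is quoted from \cite{Ref10}), and your polar decomposition $x=y\lambda$ with $y\in\mathbb{F}_{2^m}^{*}$, $\lambda\in U$, followed by the collapse $Tr_1^n(y^e g(\lambda))=Tr_1^m\bigl(y^e(g(\lambda)+g(\lambda)^{2^m})\bigr)$ and character orthogonality in $\mathbb{F}_{2^m}$, is exactly the standard route taken in that reference, and the final count $1+N(2^m-1)-(2^m+1-N)=(N-1)2^m$ checks out. One tiny correction: the nontriviality of $w\mapsto Tr_1^m(w\,c(\lambda))$ for $c(\lambda)\neq 0$ follows from the nondegeneracy of the trace form, not from $\gcd(d_1,2^n-1)=1$; only the fact $\gcd(e,2^m-1)=1$ uses that hypothesis.
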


The following two lemmas give two classes of fractional permutation polynomials over $ U $.

\begin{lemma}\label{lemma 4}{\rm\cite{Ref6}}	
	For a positive integer $ k $, let $ \gcd(2^{k}-1,2^{m}+1)=1 $, then the fractional polynomial $ \frac{x^{2^{k}+1}+x^{2^k}+1}{x^{2^{k}+1}+x+1} $ is a permutation polynomial over $ U $.
\end{lemma}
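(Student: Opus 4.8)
The plan is to show that $f(x)=\frac{x^{2^{k}+1}+x^{2^{k}}+1}{x^{2^{k}+1}+x+1}$ restricts to a bijection of $U$ by establishing two things separately: that $f$ is a well-defined map $U\to U$, and that it is injective on $U$. Since $U$ is finite, the two together give that $f$ permutes $U$. Throughout I would exploit the defining relation $x^{2^{m}}=x^{-1}$, valid for every $x\in U$, which replaces the conjugation $x\mapsto x^{2^{m}}$ by inversion. Write $N(x)=x^{2^{k}+1}+x^{2^{k}}+1$ and $D(x)=x^{2^{k}+1}+x+1$ for the numerator and denominator.

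First I would prove $f(U)\subseteq U$. Raising $N$ and $D$ to the $2^{m}$-th power and applying $x^{2^{m}}=x^{-1}$ term by term yields the two identities $N(x)^{2^{m}}=x^{-(2^{k}+1)}D(x)$ and $D(x)^{2^{m}}=x^{-(2^{k}+1)}N(x)$, whence $f(x)^{2^{m}}=D(x)/N(x)=f(x)^{-1}$, so $f(x)^{2^{m}+1}=1$ and $f(x)\in U$ wherever it is defined. To see it is always defined, note that $D(x)=0$ would force $N(x)=0$ by the second identity; but $N(x)+D(x)=x^{2^{k}}+x$, so $N(x)=D(x)=0$ gives $x^{2^{k}-1}=1$, and together with $x^{2^{m}+1}=1$ and $\gcd(2^{k}-1,2^{m}+1)=1$ this forces $x=1$, contradicting $N(1)=D(1)=1\neq0$. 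The symmetric argument rules out $N(x)=0$. This is precisely where the hypothesis $\gcd(2^{k}-1,2^{m}+1)=1$ enters, and it shows $f\colon U\to U$ is well defined.

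The core of the argument is injectivity. Putting $f(x)=z$ and clearing denominators, the relation $N(x)=zD(x)$ rearranges to $x^{2^{k}}=g_{z}(x)$ with the M\"obius map $g_{z}(x)=\frac{zx+z+1}{(z+1)x+1}$, and one checks directly that $g_{z}$ also maps $U$ to $U$. Since $x\mapsto x^{2^{k}}$ is a bijection of $U$ (as $\gcd(2^{k},2^{m}+1)=1$) with inverse $x\mapsto x^{2^{2m-k}}$ on $U$, a preimage of $z$ is exactly a fixed point of the permutation $\Psi_{z}=(\,\cdot\,)^{2^{2m-k}}\circ g_{z}$ of $U$. Thus $\#f^{-1}(z)=\#\mathrm{Fix}(\Psi_{z})$, and well-definedness already forces $\sum_{z\in U}\#\mathrm{Fix}(\Psi_{z})=|U|$; it remains to prove $\#\mathrm{Fix}(\Psi_{z})\le 1$ for each $z$, which then upgrades to equality for all $z$.

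The hard part will be this uniqueness. I would assume two solutions $x_{1}\neq x_{2}$ of $x^{2^{k}}=g_{z}(x)$ and pass to $s=x_{1}+x_{2}\neq 0$ and $p=x_{1}x_{2}$. Using the difference formula for a M\"obius map together with its product gives $s^{2^{k}-1}=\frac{z^{2}+z+1}{Q}$ and a companion relation for $p^{2^{k}}$, where $Q=\big((z+1)x_{1}+1\big)\big((z+1)x_{2}+1\big)$; membership $x_{1},x_{2}\in U$ supplies the extra constraints $p\in U$ and $p=s^{1-2^{m}}$. Eliminating $Q$ and $p$ collapses everything to a single equation in $s$ over $\mathbb{F}_{2^{2m}}$, and I expect $\gcd(2^{k}-1,2^{m}+1)=1$ to be exactly what forces $s=0$, the contradiction. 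This elimination, with its mixed Frobenius exponents $2^{k}$, $2^{m}$ and $2^{m+k}$, is the main technical obstacle; the remaining steps are routine bookkeeping. Once injectivity is in hand, finiteness of $U$ delivers the claimed permutation property.
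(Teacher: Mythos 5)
The paper does not prove this lemma itself; it imports it verbatim from \cite{Ref6}, so there is no in-paper argument to compare against, and your proposal has to stand on its own. The first half of it does: the identities $N(x)^{2^{m}}=x^{-(2^{k}+1)}D(x)$ and $D(x)^{2^{m}}=x^{-(2^{k}+1)}N(x)$ are correct for $x\in U$, they give $f(x)^{2^{m}+1}=1$, and your argument that $D(x)=0$ would force $N(x)=0$, hence $x^{2^{k}-1}=1$, hence $x=1$ by $\gcd(2^{k}-1,2^{m}+1)=1$, contradicting $N(1)=1$, is a clean and complete proof that $f$ is a well-defined map $U\to U$. The reduction of surjectivity to ``each fiber has at most one point'' via the counting identity $\sum_{z}\#f^{-1}(z)=|U|$ is also sound, as is the derivation $x^{2^{k}}=\frac{zx+z+1}{(z+1)x+1}$.

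The genuine gap is that the injectivity step --- which is the entire substance of the lemma, since the well-definedness half is routine --- is never carried out. You compute $s^{2^{k}-1}=(z^{2}+z+1)/Q$ with $s=x_{1}+x_{2}$, announce ``a companion relation for $p^{2^{k}}$,'' and then write that you \emph{expect} the elimination of $Q$ and $p$ to force $s=0$ and that this elimination ``is the main technical obstacle.'' That is a plan, not a proof: the system $s^{2^{k}}Q=(z^{2}+z+1)s$, $p^{2^{k}}Q=z^{2}p+z(z+1)s+(z+1)^{2}$, $Q=(z+1)^{2}p+(z+1)s+1$, $p=s^{1-2^{m}}$ mixes the exponents $2^{k}$ and $2^{m}$ in a way that does not visibly collapse to a single resolvable equation in $s$, and you give no indication of how $\gcd(2^{k}-1,2^{m}+1)=1$ would enter to kill $s$. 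There is also an unaddressed degenerate case: if $z^{2}+z+1=0$ (possible when $3\mid 2^{m}+1$, i.e.\ $m$ odd, which the lemma permits), the M\"obius denominator $(z+1)x+1$ can vanish at a genuine solution $x=(z+1)^{-1}=(z+1)^{2^{m}}\cdot(\,\cdots)$, your difference formula divides by $Q=0$, and $s^{2^{k}-1}=(z^{2}+z+1)/Q$ is meaningless; this case needs a separate argument. Until the uniqueness of the solution of $x^{2^{k}}=g_{z}(x)$ in $U$ is actually established, the lemma is not proved.
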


\begin{lemma}\label{lemma 5}{\rm\cite{Ref6}}	
	For a positive integer $ k $, let $ \gcd(2^{k}+1,2^{m}+1)=1 $, then the fractional polynomial $ \frac{x^{2^{k}+1}+x^{2^k}+x}{x^{2^{k}}+x+1} $ is a permutation polynomial over $ U $.
\end{lemma}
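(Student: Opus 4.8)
The plan is to show that $g(x)=\frac{x^{2^{k}+1}+x^{2^{k}}+x}{x^{2^{k}}+x+1}$ is a well-defined self-map of $U$ and then that it is injective; since $U$ is finite, injectivity already yields that $g$ permutes $U$. Throughout I abbreviate $a=2^{k}$, $N(x)=x^{a+1}+x^{a}+x$ and $D(x)=x^{a}+x+1$, and I use repeatedly that for $x\in U$ one has $x^{2^{m}}=x^{-1}$, so that the $2^{m}$-power Frobenius acts on $U$ as inversion. The first task is well-definedness, i.e. $D(x)\neq 0$ on $U$. If $D(x)=0$ for some $x\in U$, then applying Frobenius and using $x^{2^{m}}=x^{-1}$ gives $x^{-a}+x^{-1}+1=0$, and multiplying by $x^{a}$ yields $x^{a}+x^{a-1}+1=0$; adding this to $x^{a}+x+1=0$ cancels the $x^{a}$ and the constant term and leaves $x^{a-1}+x=0$, i.e. $x^{a-2}=1$ and hence $x^{a}=x^{2}$. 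Substituting back into $D(x)=0$ gives $x^{2}+x+1=0$, so $x$ has order $3$ and $3\mid 2^{m}+1$; moreover $x^{a-2}=1$ with $x^{3}=1$ forces $3\mid 2^{k}-2$, equivalently $3\mid 2^{k}+1$. Then $3\mid\gcd(2^{k}+1,2^{m}+1)$, contradicting the hypothesis. A direct check shows $N(x)=x^{a+1}D(x)^{2^{m}}$ on $U$, so the numerator is nonzero as well.

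Next I would establish $g(U)\subseteq U$ by the reciprocity $g(x)^{2^{m}}=g(x)^{-1}$. Raising $N$ and $D$ to the $2^{m}$-th power and replacing $x^{2^{m}}$ by $x^{-1}$ gives $N(x)^{2^{m}}=x^{-(a+1)}D(x)$ and $D(x)^{2^{m}}=x^{-(a+1)}N(x)$, whence $g(x)^{2^{m}}=N(x)^{2^{m}}/D(x)^{2^{m}}=D(x)/N(x)=g(x)^{-1}$. Therefore $g(x)^{2^{m}+1}=1$, so $g(x)\in U$, and combined with the previous paragraph $g$ is a genuine map $U\to U$.

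For injectivity, suppose $g(x)=g(y)$ with $x,y\in U$. Cross-multiplying gives $N(x)D(y)=N(y)D(x)$, and substituting $N=x^{a+1}D^{2^{m}}$ (valid on $U$) rewrites this as
$$\left(\frac{x}{y}\right)^{2^{k}+1}=\left(\frac{D(y)}{D(x)}\right)^{2^{m}-1}.$$
The right-hand side is a $(2^{m}-1)$-th power and therefore lies in $U$, matching the left-hand side. The strategy is to reduce the entire statement to the elementary fact that the power map $z\mapsto z^{2^{k}+1}$ permutes $U$ exactly when $\gcd(2^{k}+1,\lvert U\rvert)=\gcd(2^{k}+1,2^{m}+1)=1$, which is precisely the hypothesis.

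The hard part will be this injectivity step, namely extracting $x=y$ and pinpointing where the gcd condition enters. I would expand $N(x)D(y)+N(y)D(x)$ over $\mathbb{F}_{2}$, where, using $(x+y)^{2^{k}}=x^{2^{k}}+y^{2^{k}}$ and $(xy)^{2^{k}}+1=(xy+1)^{2^{k}}$, it collapses to a symmetric polynomial in $s=x+y$ and $p=xy$ that is visibly divisible by $s$; the factor $s=x+y$ records the trivial solution $x=y$. The remaining task is to show the cofactor admits no zero coming from a distinct pair $x,y\in U$. For this I would impose the membership relations $p^{2^{m}+1}=1$ and $(x+y)^{2^{m}}=(x+y)/xy$, parametrizing the admissible pairs $(s,p)$, and transform the displayed identity into the assertion that two $(2^{k}+1)$-th powers in $U$ coincide; bijectivity of $z\mapsto z^{2^{k}+1}$ on $U$ under the hypothesis then forces $x/y=1$. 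Verifying that the cofactor encodes exactly this power-map condition, rather than some other divisibility, is the delicate computational core, and it is here that the exponent $2^{k}+1$ (as opposed to $2^{k}-1$ in Lemma~\ref{lemma 4}) singles out the gcd appearing in the statement.
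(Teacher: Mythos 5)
The paper does not prove this lemma at all: it is quoted verbatim from \cite{Ref6}, so there is no internal argument to compare yours against, and your proposal has to stand on its own. Its first two stages do stand: the verification that $D(x)=x^{2^k}+x+1$ has no zero in $U$ (via the Frobenius-as-inversion trick, reduction to $x^2+x+1=0$, and the observation that $3\mid 2^k-2$ iff $3\mid 2^k+1$) is correct, as are the identities $N(x)=x^{2^k+1}D(x)^{2^m}$ and $g(x)^{2^m}=g(x)^{-1}$, which give $g(U)\subseteq U$; and injectivity on the finite set $U$ would indeed suffice.

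The genuine gap is that the injectivity argument --- the entire content of the lemma, and the only place the hypothesis $\gcd(2^k+1,2^m+1)=1$ does real work beyond well-definedness --- is never carried out. You reduce $g(x)=g(y)$ to $N(x)D(y)+N(y)D(x)=0$, factor out $x+y$, and then assert that the cofactor ``encodes exactly'' the condition that two $(2^k+1)$-th powers in $U$ coincide, deferring this as the ``delicate computational core.'' That hoped-for reduction is not correct as stated. Take $k=1$: writing $s=x+y$, $p=xy$, one computes $N(x)D(y)+N(y)D(x)=s\,(p^2+ps+s^2+p+s+1)$, whereas the condition ``$x/y$ is a nontrivial $(2^1+1)$-th root of unity'' is $x^2+xy+y^2=0$, i.e.\ $p=s^2$; substituting $p=s^2$ into the cofactor gives $(s+1)^2(s^2+s+1)$, which is not identically zero. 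So the vanishing of the cofactor is \emph{not} the power-map condition, and killing it must instead use the membership constraints $p^{2^m+1}=1$ and $s^{2^m}=s/p$ in some essential, as-yet-unspecified way (in the literature this step is handled by a M\"obius change of variable involving a primitive cube root of unity, with separate treatment of the parities of $k$). As written, the proposal is a plan whose decisive step is both missing and pointed in a direction that a small example already contradicts.
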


\section{Constructions of new classes of permutation polynomials}\label{sec:3}

In this section, we construct four classes of fractional permutation polynomials over $ U $. From these fractional polynomials, six new classes of permutation polynomials with coefficients 1 over $ \mathbb{F}_{2^{2m}} $ are constructed.

\begin{lemma}\label{lemma 6}
	Let $ m $ be a positive even integer, $ k $ be a positive integer, $ s $ be an integer, and $ v_{2}(k) \leq v_{2}(m) $, then the fractional polynomial 
	\begin{equation}\label{eq 1}
		\frac{x^{2^{k}+2s+1}+x^{2^{k}+2s}+x^{2^{k}+s+1}+x^{2^{k}+s}+x^{2^{k}+1}+x^{2^k}+x^{2s}+x^{s}+1}{x^{2^{k}+2s+1}+x^{2^{k}+s+1}+x^{2^{k}+1}+x^{2s+1}+x^{2s}+x^{s+1}+x^{s}+x+1}
	\end{equation}
	is a class of permutation polynomials over $ U $.
\end{lemma}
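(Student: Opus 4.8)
The plan is to show that the elaborate fraction in \eqref{eq 1} collapses, on $U$, to the fraction $\frac{x^{2^{k}+1}+x^{2^{k}}+1}{x^{2^{k}+1}+x+1}$ of Lemma \ref{lemma 4}, so that the whole statement reduces to that already-known result together with a dictionary between the two hypotheses.

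First I would factor the numerator and denominator separately. Collecting the terms that carry the factor $x^{2^{k}}$ and those that do not, the numerator reads
\[
x^{2^{k}}\bigl(x^{2s+1}+x^{2s}+x^{s+1}+x^{s}+x+1\bigr)+\bigl(x^{2s}+x^{s}+1\bigr),
\]
and the inner bracket factors as $(x+1)(x^{2s}+x^{s}+1)$, so the numerator equals $(x^{2s}+x^{s}+1)(x^{2^{k}+1}+x^{2^{k}}+1)$. The same bookkeeping applied to the denominator yields $(x^{2s}+x^{s}+1)(x^{2^{k}+1}+x+1)$, the common factor $x^{2s}+x^{s}+1$ emerging again after grouping. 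This purely algebraic identity is the crux of the argument: once it is spotted the rest is routine, but discovering the grouping is the main piece of work.

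Next I must justify cancelling the common factor on $U$. For the quotient to be a well-defined map of $U$ I need $x^{2s}+x^{s}+1\neq 0$ for every $x\in U$. Writing $y=x^{s}$, a zero would force $y^{2}+y+1=0$, i.e. $y$ of multiplicative order $3$; but $\operatorname{ord}(y)\mid\operatorname{ord}(x)\mid 2^{m}+1$, and since $m$ is even we have $2^{m}\equiv 1\pmod 3$, hence $3\nmid 2^{m}+1$. Therefore no such $y$ exists, the factor never vanishes, and \eqref{eq 1} agrees with $\frac{x^{2^{k}+1}+x^{2^{k}}+1}{x^{2^{k}+1}+x+1}$ as functions on $U$. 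This is the point at which the evenness of $m$ is actually used.

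Finally I would reconcile the hypotheses. It remains to check that $v_{2}(k)\leq v_{2}(m)$ is equivalent to the condition $\gcd(2^{k}-1,2^{m}+1)=1$ required by Lemma \ref{lemma 4}. Using the standard fact that $\gcd(2^{k}-1,2^{m}+1)=1$ exactly when $k/\gcd(k,m)$ is odd, and noting that $v_{2}\bigl(k/\gcd(k,m)\bigr)=v_{2}(k)-\min\{v_{2}(k),v_{2}(m)\}$ vanishes iff $v_{2}(k)\leq v_{2}(m)$, the two conditions coincide. Invoking Lemma \ref{lemma 4} then finishes the proof. I expect the factorization step to be the only genuine obstacle; the non-vanishing check and the gcd translation are both short once the target fraction has been identified.
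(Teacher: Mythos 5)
Your proposal is correct and follows essentially the same route as the paper: extract the common factor $x^{2s}+x^{s}+1$ from numerator and denominator, observe it has no zero on $U$ because $3\nmid 2^{m}+1$ when $m$ is even, and reduce to Lemma~\ref{lemma 4} via the equivalence of $v_{2}(k)\leq v_{2}(m)$ with $\gcd(2^{k}-1,2^{m}+1)=1$. You merely supply more detail than the paper does (the explicit grouping that produces the factorization, and the gcd translation), where the paper simply asserts these steps.
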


\begin{proof}
	The fractional polynomial (\ref{eq 1}) can be written as
	\begin{equation*}
		\frac{(x^{2^{k}+1}+x^{2^k}+1)(x^{2s}+x^{s}+1)}{(x^{2^{k}+1}+x+1)(x^{2s}+x^{s}+1)}.
	\end{equation*}
	
	Obviously, $ x^{2}+x+1=0 $ has no solution in $ U $ if $ m $ is even, and $ \forall x \in U, x^{s} \in U $ if $ s $ is an integer. Thus, $ x^{2s}+x^{s}+1=0 $ has no solution in $ U $, then the fractional polynomial (\ref{eq 1}) can be written as
	\begin{equation*}
		\frac{x^{2^{k}+1}+x^{2^k}+1}{x^{2^{k}+1}+x+1}.
	\end{equation*}
	
	Since $ v_{2}(k) \leq v_{2}(m) $, $ \gcd(2^{k}-1,2^{m}+1)=1 $, according to Lemma \ref{lemma 4}, the fractional polynomial (\ref{eq 1}) is a class of permutation polynomials over $ U $. 
	
	The proof is complete.\qed
\end{proof}

\begin{theorem}\label{theorem 1}
	Let $ m $ be a positive even integer, $ k $ be a positive integer, $ s $ be an integer, $ v_{2}(k) \leq v_{2}(m) $, and $ \gcd(2^{k}+2s+1,2^{m}-1)=1 $, then the polynomial
	\begin{equation}\label{eq 4}
		\begin{split}	
			&x^{2^{k+m}+s2^{m+1}+2^{m}}+x^{2^{k+m}+s2^{m}+2^{m}+s}+x^{2^{k+m}+2^{m}+2s}+x^{s2^{m+1}+2^{m}+2^{k}}+\\
			&x^{s2^{m+1}+2^{k}+1}+x^{s2^{m}+2^{m}+2^{k}+s}+x^{s2^{m}+2^{k}+s+1}+x^{2^{m}+2^{k}+2s}+x^{2^{k}+2s+1}
		\end{split}
	\end{equation} 
	is a class of permutation polynomials over $ \mathbb{F}_{2^{2m}} $.
\end{theorem}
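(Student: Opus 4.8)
The plan is to apply Lemma \ref{lemma 1} (Zieve's criterion) with $q = 2^{2m}$ and $d = 2^m+1$, so that $\mu_d = U$ and $(q-1)/d = 2^m-1$. I would take $r = 2^{k}+2s+1$ and let $h(x)$ be the \emph{denominator} of the fractional polynomial (\ref{eq 1}), namely
$$h(x) = x^{2^{k}+2s+1}+x^{2^{k}+s+1}+x^{2^{k}+1}+x^{2s+1}+x^{2s}+x^{s+1}+x^{s}+x+1.$$
The first task is to check that $f(x) = x^{r} h\!\left(x^{2^{m}-1}\right)$ is exactly the polynomial (\ref{eq 4}). This is a routine but essential bookkeeping step: substituting $y = x^{2^{m}-1}$ into each of the nine monomials $y^{e}$ of $h$ and multiplying by $x^{r}$ produces a monomial with exponent $r + (2^{m}-1)e$, and one verifies term by term that these nine exponents coincide with those displayed in (\ref{eq 4}). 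For instance the leading term $y^{2^{k}+2s+1}$ contributes $x^{(2^{k}+2s+1)2^{m}} = x^{2^{k+m}+s2^{m+1}+2^{m}}$, matching the first monomial of (\ref{eq 4}).

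Once $f$ is written in the form required by Lemma \ref{lemma 1}, the two hypotheses must be verified. Condition (1), $\gcd(r,2^{m}-1) = \gcd(2^{k}+2s+1,2^{m}-1) = 1$, is precisely the extra hypothesis imposed in the statement, so nothing further is needed there.

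The heart of the argument is condition (2): that $x^{r}h(x)^{2^{m}-1}$ permutes $U$. Here I would exploit two facts. First, since all coefficients of $h$ lie in $\mathbb{F}_{2}$ and every $x \in U$ satisfies $x^{2^{m}+1}=1$, hence $x^{2^{m}}=x^{-1}$, we obtain $h(x)^{2^{m}} = h(x^{2^{m}}) = h(x^{-1})$, so that on $U$
$$x^{r}h(x)^{2^{m}-1} = x^{r}\,\frac{h(x^{-1})}{h(x)}.$$
Second, a short computation shows $x^{r}h(x^{-1})$ equals the \emph{numerator} of (\ref{eq 1}): replacing $x$ by $x^{-1}$ in $h$ sends each exponent $e \mapsto -e$, and then multiplying by $x^{r}=x^{2^{k}+2s+1}$ sends $e \mapsto (2^{k}+2s+1)-e$, which carries the nine exponents of the denominator onto exactly the nine exponents of the numerator. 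Consequently $x^{r}h(x)^{2^{m}-1}$ coincides on $U$ with the fractional polynomial (\ref{eq 1}), which by Lemma \ref{lemma 6} permutes $U$. This establishes condition (2), and therefore, by Lemma \ref{lemma 1}, the polynomial $f$, equal to (\ref{eq 4}), permutes $\mathbb{F}_{2^{2m}}$.

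The only genuinely delicate point is the numerator--denominator reciprocity $x^{r}h(x^{-1}) = N(x)$ together with the passage $h(x)^{2^{m}}=h(x^{-1})$ on $U$; everything else reduces to exponent arithmetic. I expect the main obstacle to be presenting these monomial computations cleanly and matching all nine exponents against (\ref{eq 4}), rather than any conceptual difficulty.
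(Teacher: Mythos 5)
Your proposal is correct and follows essentially the same route as the paper: write (\ref{eq 4}) as $x^{2^{k}+2s+1}h(x^{2^{m}-1})$ with $h$ the denominator of (\ref{eq 1}), invoke Lemma \ref{lemma 1}, and use $h(x)^{2^{m}}=h(x^{-1})$ on $U$ to identify $x^{r}h(x)^{2^{m}-1}$ with the fractional polynomial (\ref{eq 1}), which permutes $U$ by Lemma \ref{lemma 6}. The only difference is cosmetic: you explicitly record the verification of condition (1) of Lemma \ref{lemma 1}, which the paper leaves implicit in its hypothesis $\gcd(2^{k}+2s+1,2^{m}-1)=1$.
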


\begin{proof}
	The polynomial (\ref{eq 4}) can be written as $ x^{2^{k}+2s+1}h(x^{2^{m}-1}) $, where $ h(x)=x^{2^{k}+2s+1}+x^{2^{k}+s+1}+x^{2^{k}+1}+x^{2s+1}+x^{2s}+x^{s+1}+x^{s}+x+1 $. According to Lemma \ref{lemma 1}, the polynomial (\ref{eq 4}) permutes $ \mathbb{F}_{2^{2m}} $ if and only if $ x^{2^{k}+2s+1}h(x)^{2^{m}-1} $ permutes $ U $. For $ x \in U $,  $ x^{2^{k}+2s+1}h(x)^{2^{m}-1} $ can be written as
	\begin{equation*}
		\begin{split}
			&\frac{x^{2^{k}+2s+1}h(x)^{2^{m}}}{h(x)}=\frac{x^{2^{k}+2s+1}h(x^{2^{m}})}{h(x)}=\frac{x^{2^{k}+2s+1}h(x^{-1})}{h(x)}\\=
			&\frac{x^{2^{k}+2s+1}+x^{2^{k}+2s}+x^{2^{k}+s+1}+x^{2^{k}+s}+x^{2^{k}+1}+x^{2^k}+x^{2s}+x^{s}+1}{x^{2^{k}+2s+1}+x^{2^{k}+s+1}+x^{2^{k}+1}+x^{2s+1}+x^{2s}+x^{s+1}+x^{s}+x+1}.
		\end{split}
	\end{equation*}
	
	According to Lemma \ref{lemma 6}, $ x^{2^{k}+2s+1}h(x)^{2^{m}-1} $ permutes $ U $, thus the polynomial (\ref{eq 4}) is a class of permutation polynomials over $ \mathbb{F}_{2^{2m}} $. 
	
	The proof is complete.\qed
\end{proof}

Some known classes of pemutation polynomials are the special cases of the class of permutation polynomials (\ref{eq 4}):

1. (\cite{Ref3} Conjecture 2) $ k=2 $, $ s=0 $: 
\begin{equation*}
	x^{5}+x^{2^{m}+4}+x^{5\cdot2^{m}}.
\end{equation*}

2. (\cite{Ref14} Theorem 4.4) $ k=1 $, $ s=1 $: 
\begin{equation*}
	x^{5}+x^{4\cdot2^{m}+1}+x^{5\cdot2^{m}}.
\end{equation*}

\begin{theorem}\label{theorem 2}
	Let $ m $ be a positive even integer, $ k,i $ be positive integers, $ s,u $ be integers, $ Q=2^{m}+1 $, $ K=2^{k} $, $ I=2i $, $ v_{2}(k) \leq v_{2}(m) $, $ \gcd(i,Q)=1 $, $ \gcd(K+2s+1,Q)=1 $, and $ \gcd(d_{1},2^{2m}-1)=1 $, then the polynomial
	\begin{equation}\label{eq 5}
		\sum_{j=1}^{9}x^{d_{j}}, where 
		\left\{
		\begin{aligned}
			&d_{1}=(K+2s+1)i+uQ\\
			&d_{2}=(K+2s+1)i+uQ+(Q-2)i\\
			&d_{3}=(K+2s+1)i+uQ+(Q-2)si\\
			&d_{4}=(K+2s+1)i+uQ+(Q-2)(s+1)i\\
			&d_{5}=(K+2s+1)i+uQ+(Q-2)2si\\
			&d_{6}=(K+2s+1)i+uQ+(Q-2)(2s+1)i\\
			&d_{7}=(K+2s+1)i+uQ+(Q-2)(K+1)i\\
			&d_{8}=(K+2s+1)i+uQ+(Q-2)(K+s+1)i\\
			&d_{9}=(K+2s+1)i+uQ+(Q-2)(K+2s+1)i
		\end{aligned}
		\right.
	\end{equation}
	is a class of permutation polynomials over $ \mathbb{F}_{2^{2m}} $.
\end{theorem}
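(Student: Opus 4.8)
The plan is to apply the exponential-sum criterion of Lemma \ref{lemma 2} rather than Zieve's criterion, exactly as the ``new method'' of \cite{Ref20} suggests. First I would observe that $Q-2=2^m-1$, so every exponent can be written as $d_j=d_1+(2^m-1)c_j i$ with $c_1,\dots,c_9=0,1,s,s+1,2s,2s+1,K+1,K+s+1,K+2s+1$; hence $d_j\equiv d_1\pmod{2^m-1}$ and Lemma \ref{lemma 2} applies with $e\equiv d_1$, its coprimality hypothesis being the assumption $\gcd(d_1,2^{2m}-1)=1$. By Lemmas \ref{lemma 2} and \ref{lemma 3} it then suffices to prove that for every $\delta\in\mathbb{F}_{2^{2m}}^{*}$ the count $N$ equals $1$, i.e. that exactly one $\lambda\in U$ satisfies $g(\lambda)+g(\lambda)^{2^m}=0$, where $g(\lambda)=\lambda^{d_1}+\sum_{j=2}^{9}\delta^{d_1-d_j}\lambda^{d_j}$.

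Next I would exploit $\lambda^{2^m}=\lambda^{-1}$ for $\lambda\in U$. Writing $\gamma=\delta^{-(2^m-1)i}$ (which lies in $U$) and $\mu=\gamma\lambda^{-I}$, each coefficient becomes $\delta^{d_1-d_j}=\gamma^{c_j}$ and each power becomes $\lambda^{d_j}=\lambda^{d_1}\lambda^{-Ic_j}$, so that $g(\lambda)=\lambda^{d_1}P(\mu)$ with $P(\mu)=\sum_{j=1}^{9}\mu^{c_j}$. The crucial algebraic step is the factorization
\begin{equation*}
P(\mu)=(1+\mu^{s}+\mu^{2s})(1+\mu+\mu^{K+1}),
\end{equation*}
which mirrors the numerator bookkeeping already used in the proof of Lemma \ref{lemma 6}. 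Since $m$ is even, $1+\mu^{s}+\mu^{2s}\neq0$ on $U$ (there is no element of order $3$ in $U$, exactly as in Lemma \ref{lemma 6}), while the well-definedness of the permutation in Lemma \ref{lemma 4} (valid here because $v_2(k)\le v_2(m)$ forces $\gcd(2^k-1,Q)=1$) gives $1+\mu+\mu^{K+1}\neq0$ on $U$; hence $P(\mu)\neq0$ throughout and $g(\lambda)\neq0$.

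With $g(\lambda)=\lambda^{d_1}P(\mu)$ and $g(\lambda)^{2^m}=\lambda^{-d_1}P(\mu^{-1})$ (using that $P$ has coefficients in $\mathbb{F}_2$), the equation $g(\lambda)+g(\lambda)^{2^m}=0$ is equivalent to $\lambda^{2d_1}=P(\mu^{-1})/P(\mu)$. I would then simplify the right-hand side using the factorization: $\frac{1+\mu^{-s}+\mu^{-2s}}{1+\mu^{s}+\mu^{2s}}=\mu^{-2s}$ and $\frac{1+\mu^{-1}+\mu^{-(K+1)}}{1+\mu+\mu^{K+1}}=\mu^{-(K+1)}\phi(\mu)$, where $\phi(\mu)=\frac{\mu^{K+1}+\mu^{K}+1}{\mu^{K+1}+\mu+1}$ is precisely the fractional permutation of Lemma \ref{lemma 4}. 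This yields $P(\mu^{-1})/P(\mu)=\mu^{-(K+2s+1)}\phi(\mu)$. On the other side, because $\gcd(i,Q)=1$ the map $\lambda\mapsto\lambda^{I}$ permutes $U$, so $\lambda\mapsto\mu$ is a bijection of $U$; moreover $\lambda^{Q}=1$ and $\lambda^{I}=\gamma\mu^{-1}$ give $\lambda^{2d_1}=(\lambda^{I})^{K+2s+1}=\gamma^{K+2s+1}\mu^{-(K+2s+1)}$.

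Substituting, the factor $\mu^{-(K+2s+1)}$ cancels and the whole condition collapses to the single equation $\phi(\mu)=\gamma^{K+2s+1}$. Since $\gamma^{K+2s+1}\in U$ is fixed and $\phi$ permutes $U$ by Lemma \ref{lemma 4}, there is exactly one such $\mu$, and hence (via the bijection $\lambda\leftrightarrow\mu$) exactly one $\lambda$; thus $N=1$ for every $\delta$, the exponential sum vanishes, and Lemma \ref{lemma 2} completes the proof. The main obstacle I anticipate is purely algebraic: spotting the factorization of $P$ and pushing the two fractional simplifications through so that the extraneous monomial factor $\mu^{-(K+2s+1)}$ cancels against $\lambda^{2d_1}$, leaving exactly the Lemma \ref{lemma 4} permutation whose bijectivity does all the counting.
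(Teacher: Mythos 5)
Your proposal is correct, and every step checks out: the factorization $P(\mu)=(1+\mu^{s}+\mu^{2s})(1+\mu+\mu^{K+1})$ does reproduce the nine exponents $c_j=0,1,s,s+1,2s,2s+1,K+1,K+s+1,K+2s+1$; the substitution $\mu=\gamma\lambda^{-I}$ with $\gamma=\delta^{-(2^m-1)i}\in U$ is a bijection of $U$ because $\gcd(2i,Q)=\gcd(i,Q)=1$; and the reduction to $\phi(\mu)=\gamma^{K+2s+1}$ is exact. The ingredients are the same as the paper's (Lemmas \ref{lemma 2} and \ref{lemma 3} together with the Lemma \ref{lemma 4} permutation of $U$), but you run the argument in the opposite direction. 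The paper begins with the known permutation $\alpha_1(x^{I})$ of $U$, substitutes $x=\beta^{t}y$ with $t\equiv 1/((K+2s+1)I)\pmod Q$, and then \emph{derives} the exponents $d_j$ by solving the congruence system (\ref{eq 48})--(\ref{amend 2}) so that $g(\lambda)$ acquires the shape $\lambda^{d_1}+\sum_{j}\delta^{d_1-d_j}\lambda^{d_j}$ demanded by Lemma \ref{lemma 3}; you instead take the stated $d_j$ as given and \emph{verify} directly that the associated equation over $U$ has a unique root. This lets you bypass Lemma \ref{lemma 6} entirely (your factor $1+\mu^{s}+\mu^{2s}$ cancels between $P(\mu^{-1})$ and $P(\mu)$, which is precisely the content of that lemma) and avoids the paper's ``we hope that $g(\lambda)$ has the form\dots'' matching step. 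What your route buys is a cleaner, fully checkable computation in which the hypotheses are seen doing their work: $v_2(k)\le v_2(m)$ gives $\gcd(2^k-1,Q)=1$, hence both the nonvanishing of $1+\mu+\mu^{K+1}$ on $U$ and the bijectivity of $\phi$. The only two points I would ask you to spell out in a final write-up are that $\gamma\in U$ (because $(2^m-1)(2^m+1)=2^{2m}-1$) and that the nonvanishing of $x^{K+1}+x+1$ on $U$ is genuinely part of what Lemma \ref{lemma 4} asserts (a fraction whose denominator vanished somewhere on $U$ could not permute $U$); both are immediate.
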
	

\begin{proof}
	Since $ \gcd(i,Q)=1 $, $ x^I $ is a permutation polynomial over $ U $. According to Lemma \ref{lemma 6}, the fractional polynomial (\ref{eq 1}) permutes $ U $. We denote the fractional polynomial (\ref{eq 1}) by $ \alpha_{1}(x) $, then $ \alpha_{1}(x^I) $ permutes $ U $, thus, let $ \beta \in U $, the equation
	\begin{equation*}
		\alpha_{1}(x^I)=\beta
	\end{equation*}
	has only one solution in $ U $.
	
	For all $ x \in U $, $ x=\beta^{t}y $, where $ y \in U $ and $ t $ is an integer, thus the above equation can be written as
	\begin{equation}
		\begin{split}
			&\left[\beta^{(K+2s+1)tI}+\beta^{(K+2s+1)tI+1}\right]y^{(K+2s+1)I}+\beta^{(K+2s)tI}y^{(K+2s)I}\\+
			&\left[\beta^{(K+s+1)tI}+\beta^{(K+s+1)tI+1}\right]y^{(K+s+1)I}+\beta^{(K+s)tI}y^{(K+s)I}\\+
			&\left[\beta^{(K+1)tI}+\beta^{(K+1)tI+1}\right]y^{(K+1)I}+\beta^{KtI}y^{KI}+\beta^{(2s+1)tI+1}y^{(2s+1)I}\\+
			&\left[\beta^{2stI}+\beta^{2stI+1}\right]y^{2sI}+\beta^{(s+1)tI+1}y^{(s+1)I}\\+
			&\left[\beta^{stI}+\beta^{stI+1}\right]y^{sI}+\beta^{tI+1}y^{I}+\beta+1=0.
		\end{split}
	\end{equation}
	
	Since $ \gcd(i,Q)=1 $ and $ \gcd(K+2s+1,Q)=1 $, there exists $ t \equiv \frac{1}{KI+2sI+I} \pmod Q $. Dividing both sides of the above equation by $ \beta^{(K+2s+1)tI}y^{(K+2s+1)i} $, then the equation
	\begin{equation*}
		g(y)+g(y)^{2^{m}}=0
	\end{equation*}
	has only one solution in $ U $, where
	\begin{equation*}
		\begin{split}
			g(y)=
			&y^{(K+2s+1)i}+\beta^{-tI}y^{(K+2s-1)i}+\beta^{-stI}y^{(K+1)i}\\+
			&\beta^{-(s+1)tI}y^{(K-1)i}+\beta^{-2stI}y^{(K-2s+1)i}\\+
			&\beta^{-(2s+1)tI}y^{(K-2s-1)i}+\beta^{-(K+1)tI}y^{(-K+2s-1)i}\\+
			&\beta^{-(K+s+1)tI}y^{(-K-1)i}+\beta^{-(K+2s+1)tI}y^{(-K-2s-1)i}.
		\end{split}
	\end{equation*}
	
	Let $ a $ be an integer, then $ \beta $ can be written as $ \delta^{(Q-2)a} $ where $ \delta \in \mathbb{F}_{2^{n}}^{*} $. According to the above equation, the equation
	\begin{equation*}
		g(\lambda)+g(\lambda)^{2^{m}}=0
	\end{equation*}
	has only one solution in $ U $, where
	\begin{equation*}
		\begin{split}
			g(\lambda)=&\lambda^{(K+2s+1)i}+\delta^{-(Q-2)atI}\lambda^{(K+2s-1)i}+\delta^{-(Q-2)astI}\lambda^{(K+1)i}\\+
			&\delta^{-(Q-2)(s+1)atI}\lambda^{(K-1)i}+\delta^{-(Q-2)2satI}\lambda^{(K-2s+1)i}\\+
			&\delta^{-(Q-2)(2s+1)atI}\lambda^{(K-2s-1)i}+\delta^{-(Q-2)(K+1)atI}\lambda^{(-K+2s-1)i}\\+
			&\delta^{-(Q-2)(K+s+1)atI}\lambda^{(-K-1)i}+\delta^{-(Q-2)(K+2s+1)atI}\lambda^{(-K-2s-1)i}.
		\end{split}
	\end{equation*}
	
	Inspired by Lemma \ref{lemma 2} and Lemma \ref{lemma 3}, we hope that $ g(\lambda) $ has the form of $ \lambda^{d_{1}}+\sum_{j=2}^{9}\delta^{d_{1}-d_{j}}\lambda^{d_{j}} $. Let $ u_{l} $ be integers with $ l=1,\cdots,8 $, we assume that
	
	\begin{equation}\label{eq 48}
		\left\{
		\begin{aligned}
			&d_{1} \equiv (K+2s+1)i \pmod Q \\ 
			&d_{2} \equiv (K+2s-1)i \pmod Q \\
			&d_{3} \equiv (K+1)i \pmod Q \\
			&d_{4} \equiv (K-1)i \pmod Q \\
			&d_{5} \equiv (K-2s+1)i \pmod Q \\
			&d_{6} \equiv (K-2s-1)i \pmod Q \\
			&d_{7} \equiv (-K+2s-1)i \pmod Q \\
			&d_{8} \equiv (-K-1)i \pmod Q \\
			&d_{9} \equiv (-K-2s-1)i \pmod Q\\
			&d_{1}-d_{2}=-(Q-2)atI+u_{1}Q(Q-2) \\ 
			&d_{1}-d_{3}=-(Q-2)astI+u_{2}Q(Q-2) \\
			&d_{1}-d_{4}=-(Q-2)(s+1)atI+u_{3}Q(Q-2) \\
			&d_{1}-d_{5}=-(Q-2)2satI+u_{4}Q(Q-2) \\
			&d_{1}-d_{6}=-(Q-2)(2s+1)atI+u_{5}Q(Q-2) \\
			&d_{1}-d_{7}=-(Q-2)(K+1)atI+u_{6}Q(Q-2) \\
			&d_{1}-d_{8}=-(Q-2)(K+s+1)atI+u_{7}Q(Q-2) \\
			&d_{1}-d_{9}=-(Q-2)(K+2s+1)atI+u_{8}Q(Q-2).
		\end{aligned}
		\right.
	\end{equation}
	
	Substituting $ d_{1}=(K+2s+1)i+uQ $ into the last eight equations of (\ref{eq 48}), we have
	\begin{equation}\label{eq 49}
		\left\{
		\begin{aligned}
			&d_{2}=(K+2s+1)i+uQ+(Q-2)atI-u_{1}Q(Q-2) \\ 
			&d_{3}=(K+2s+1)i+uQ+(Q-2)astI-u_{2}Q(Q-2) \\
			&d_{4}=(K+2s+1)i+uQ+(Q-2)(s+1)atI-u_{3}Q(Q-2) \\
			&d_{5}=(K+2s+1)i+uQ+(Q-2)2satI-u_{4}Q(Q-2) \\
			&d_{6}=(K+2s+1)i+uQ+(Q-2)(2s+1)atI-u_{5}Q(Q-2) \\
			&d_{7}=(K+2s+1)i+uQ+(Q-2)(K+1)atI-u_{6}Q(Q-2) \\
			&d_{8}=(K+2s+1)i+uQ+(Q-2)(K+s+1)atI-u_{7}Q(Q-2) \\
			&d_{9}=(K+2s+1)i+uQ+(Q-2)(K+2s+1)atI-u_{8}Q(Q-2).
		\end{aligned}
		\right.
	\end{equation}
	
	Substituting (\ref{eq 49}) into the first nine equations of (\ref{eq 48}), we obtain
	\begin{equation}\label{amend 1}
		\left\{
		\begin{aligned}
			&(K+2s+1)i-2atI \equiv (K+2s-1)i \pmod Q  \\ 
			&(K+2s+1)i-2astI \equiv (K+1)i \pmod Q \\
			&(K+2s+1)i-2(s+1)atI \equiv (K-1)i \pmod Q \\
			&(K+2s+1)i-4satI \equiv (K-2s+1)i \pmod Q \\
			&(K+2s+1)i-2(2s+1)atI \equiv (K-2s-1)i \pmod Q \\
			&(K+2s+1)i-2(K+1)atI \equiv (-K+2s-1)i \pmod Q \\
			&(K+2s+1)i-2(K+s+1)atI \equiv (-K-1)i \pmod Q \\
			&(K+2s+1)i-2(K+2s+1)atI) \equiv (-K-2s-1)i \pmod Q.
		\end{aligned}
		\right.
	\end{equation}
	
	We can conclude $ a \equiv \frac{1}{2t} \pmod Q $ from the equations (\ref{amend 1}). Substituting $ a \equiv \frac{1}{2t} \pmod Q $ into the equations (\ref{eq 49}), we have
	
	\begin{equation}\label{amend 2}
		\left\{
		\begin{aligned}
			d_{2} &\equiv (K+2s+1)i+uQ+(Q-2)i \pmod {Q(Q-2)} \\ 
			d_{3} &\equiv (K+2s+1)i+uQ+(Q-2)si \pmod {Q(Q-2)} \\
			d_{4} &\equiv (K+2s+1)i+uQ+(Q-2)(s+1)i \pmod {Q(Q-2)} \\
			d_{5} &\equiv (K+2s+1)i+uQ+(Q-2)2si \pmod {Q(Q-2)} \\
			d_{6} &\equiv (K+2s+1)i+uQ+(Q-2)(2s+1)i \pmod {Q(Q-2)} \\
			d_{7} &\equiv (K+2s+1)i+uQ+(Q-2)(K+1)i \pmod {Q(Q-2)} \\
			d_{8} &\equiv (K+2s+1)i+uQ+(Q-2)(K+s+1)i \pmod {Q(Q-2)} \\
			d_{9} &\equiv (K+2s+1)i+uQ+(Q-2)(K+2s+1)i \pmod {Q(Q-2)}.
		\end{aligned}
		\right.
	\end{equation}
	
	Thus, the equation
	\begin{equation*}
		\lambda^{d_{1}}+\sum_{j=2}^{9}\delta^{d_{1}-d_{j}}\lambda^{d_{j}}+\left(\lambda^{d_{1}}+\sum_{j=2}^{9}\delta^{d_{1}-d_{j}}\lambda^{d_{j}}\right)^{2^m}=0
	\end{equation*}
	has only one solution in $ U $, where $ d_{1}=(K+2s+1)i+uQ $ and $ d_{j} $($ j=2,\cdots,9 $) are given by (\ref{amend 2}). 
	
	According to Lemma \ref{lemma 2} and Lemma \ref{lemma 3}, $ \sum_{j=1}^{9}x^{d_{j}} $ is a class of permutation polynomials over $\mathbb{F}_{2^{2m}} $. For $ x \in \mathbb{F}_{2^{2m}} $, $ x^{d_{j}}=x^{d_{j} \pmod {Q(Q-2})} $ since $ Q(Q-2)=2^{2m}-1 $. Thus, the polynomial (\ref{eq 5}) is a class of permutation polynomials over $ \mathbb{F}_{2^{2m}} $. 
	
	The proof is complete.\qed
\end{proof}

The polynomial (\ref{eq 5}) is a large class of permutation polynomials. Some known classes of pemutation polynomials are the special cases of the class of permutation polynomials (\ref{eq 5}):

1. (\cite{Ref7} Theorem 3) $ k=2 $, $ s=0 $, $ u=-i $, $ i \equiv 1/3 \pmod {2^{m}+1} $: 
\begin{equation*}
	x+x^{1-(2^{m}-1)/3}+x^{1+4(2^{m}-1)/3}.
\end{equation*}

2. (\cite{Ref6} Theorem 1) $ s=0 $, $ u=0 $, $ i \equiv 1/(2^{k}+1) \pmod {2^{m}+1} $: 
\begin{equation*}
	x+x^{2^{m}}+x^{1+(2^{m}-1)/(2^{k}+1)}.
\end{equation*}

3. (Theorem \ref{theorem 1}) $ u=0 $, $ i=1 $: 
\begin{equation*}
	\begin{split}	
		&x^{2^{k+m}+s2^{m+1}+2^{m}}+x^{2^{k+m}+s2^{m}+2^{m}+s}+x^{2^{k+m}+2^{m}+2s}+x^{s2^{m+1}+2^{m}+2^{k}}+\\
		&x^{s2^{m+1}+2^{k}+1}+x^{s2^{m}+2^{m}+2^{k}+s}+x^{s2^{m}+2^{k}+s+1}+x^{2^{m}+2^{k}+2s}+x^{2^{k}+2s+1}.
	\end{split}
\end{equation*}

(i) (\cite{Ref3} Conjecture 2) $ k=2 $, $ s=0 $: 
\begin{equation*}
	x^{5}+x^{2^{m}+4}+x^{5\cdot2^{m}}.
\end{equation*}

(ii) (\cite{Ref14} Theorem 4.4) $ k=1 $, $ s=1 $: 
\begin{equation*}
	x^{5}+x^{4\cdot2^{m}+1}+x^{5\cdot2^{m}}.
\end{equation*}

\begin{lemma}\label{lemma 7}
	Let $ m $ be a positive even integer, $ k $ be a positive integer, $ s $ be an integer, and $ \gcd(2^{k}+1,2^{m}+1)=1 $, then the fractional polynomial 
	\begin{equation}\label{eq 14}
		\frac{x^{2^{k}+2s+1}+x^{2^{k}+2s}+x^{2^{k}+s+1}+x^{2^{k}+s}+x^{2^{k}+1}+x^{2^k}+x^{2s+1}+x^{s+1}+x}{x^{2^{k}+2s}+x^{2^{k}+s}+x^{2^{k}}+x^{2s+1}+x^{2s}+x^{s+1}+x^{s}+x+1}
	\end{equation}
	is a class of permutation polynomials over $ U $.
\end{lemma}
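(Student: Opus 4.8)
The plan is to mirror the proof of Lemma~\ref{lemma 6} exactly, reducing the fractional polynomial (\ref{eq 14}) to the form handled by Lemma~\ref{lemma 5}. First I would factor the numerator and denominator of (\ref{eq 14}). A direct expansion shows that the numerator equals $(x^{2^{k}+1}+x^{2^{k}}+x)(x^{2s}+x^{s}+1)$ and the denominator equals $(x^{2^{k}}+x+1)(x^{2s}+x^{s}+1)$, so (\ref{eq 14}) can be rewritten as
\begin{equation*}
	\frac{(x^{2^{k}+1}+x^{2^{k}}+x)(x^{2s}+x^{s}+1)}{(x^{2^{k}}+x+1)(x^{2s}+x^{s}+1)}.
\end{equation*}
These two factorizations are the only computations in the argument; they are routine polynomial multiplications, and each of the nine terms in the numerator and each of the nine terms in the denominator matches term by term against the stated expression.

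Next I would argue that the common factor $x^{2s}+x^{s}+1$ never vanishes on $U$, so that it may be cancelled. Since $x\in U$ and $s$ is an integer we have $x^{s}\in U$, so it suffices to show that $y^{2}+y+1=0$ has no solution in $U$. The roots of $y^{2}+y+1$ are the primitive cube roots of unity, and these lie in $U=\mu_{2^{m}+1}$ exactly when $3\mid 2^{m}+1$. Because $m$ is even, $2^{m}\equiv 1\pmod 3$, hence $2^{m}+1\equiv 2\pmod 3$ and $3\nmid 2^{m}+1$; therefore no such root lies in $U$ and $x^{2s}+x^{s}+1\neq 0$ for all $x\in U$. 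This is precisely the nonvanishing fact already invoked in Lemma~\ref{lemma 6}.

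Cancelling this nonzero factor reduces (\ref{eq 14}) to $\frac{x^{2^{k}+1}+x^{2^{k}}+x}{x^{2^{k}}+x+1}$. Finally, since $\gcd(2^{k}+1,2^{m}+1)=1$ by hypothesis, Lemma~\ref{lemma 5} guarantees that this fractional polynomial permutes $U$, which completes the proof. I do not anticipate any genuine obstacle: the structure is identical to that of Lemma~\ref{lemma 6}, with Lemma~\ref{lemma 5} playing the role that Lemma~\ref{lemma 4} played there, and the only point requiring care is the term-by-term verification of the factorizations of the numerator and denominator.
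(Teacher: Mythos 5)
Your proposal is correct and follows essentially the same route as the paper's own proof: factor out the common factor $x^{2s}+x^{s}+1$, observe that it has no zero on $U$ because $3\nmid 2^{m}+1$ when $m$ is even, cancel, and apply Lemma~\ref{lemma 5}. The only difference is that you spell out the divisibility argument for the nonvanishing of $x^{2}+x+1$ on $U$, which the paper states as obvious.
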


\begin{proof}
	The fractional polynomial (\ref{eq 14}) can be written as
	\begin{equation*}
		\frac{(x^{2^{k}+1}+x^{2^k}+x)(x^{2s}+x^{s}+1)}{(x^{2^{k}}+x+1)(x^{2s}+x^{s}+1)}.
	\end{equation*}
	
	Obviously, $ x^{2}+x+1=0 $ has no solution in $ U $ if $ m $ is even, and $ \forall x \in U, x^{s} \in U $ if $ s $ is an integer. Thus, $ x^{2s}+x^{s}+1=0 $ has no solution in $ U $, then the fractional polynomial (\ref{eq 14}) can be written as
	\begin{equation*}
		\frac{x^{2^{k}+1}+x^{2^k}+x}{x^{2^{k}}+x+1}.
	\end{equation*}
	
	According to Lemma \ref{lemma 5}, the fractional polynomial (\ref{eq 14}) is a class of permutation polynomials over $ U $. 
	
	The proof is complete. \qed
\end{proof}

\begin{theorem}\label{theorem 3}
	Let $ m $ be a positive even integer, $ k $ be a positive integer, $ s $ be an integer, $ \gcd(2^{k}+1,2^{m}+1)=1 $, and $ \gcd(2^{k}+2s+1,2^{m}-1)=1 $, then the polynomial
	\begin{equation}\label{eq 17}
		\begin{split}	
			&x^{2^{k+m}+s2^{m+1}+1}+x^{2^{k+m}+s2^{m}+s+1}+x^{2^{k+m}+2s+1}+x^{s2^{m+1}+2^{m}+2^{k}}+\\
			&x^{s2^{m+1}+2^{k}+1}+x^{s2^{m}+2^{m}+2^{k}+s}+x^{s2^{m}+2^{k}+s+1}+x^{2^{m}+2^{k}+2s}+x^{2^{k}+2s+1}
		\end{split}
	\end{equation} 
	is a class of permutation polynomials over $ \mathbb{F}_{2^{2m}} $.
\end{theorem}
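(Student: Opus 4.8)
The plan is to follow exactly the same route as the proof of Theorem \ref{theorem 1}, replacing Lemma \ref{lemma 6} by Lemma \ref{lemma 7}. First I would recognize the polynomial (\ref{eq 17}) as an instance of the form $x^{r}h(x^{(q-1)/d})$ suited to Lemma \ref{lemma 1}. Setting $q=2^{2m}$ and $d=2^{m}+1$ gives $(q-1)/d=2^{m}-1$, and I would take $r=2^{k}+2s+1$ together with $h(x)=x^{2^{k}+2s}+x^{2^{k}+s}+x^{2^{k}}+x^{2s+1}+x^{2s}+x^{s+1}+x^{s}+x+1$, the latter being precisely the denominator of the fractional polynomial (\ref{eq 14}). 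The first thing to check is that $x^{2^{k}+2s+1}h(x^{2^{m}-1})$ really does reproduce the nine exponents displayed in (\ref{eq 17}); this amounts to expanding $2^{k}+2s+1+(2^{m}-1)e$ for each exponent $e$ appearing in $h$ and simplifying, a purely mechanical verification.

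Next I would apply Lemma \ref{lemma 1}. Condition (1), namely $\gcd(r,(q-1)/d)=\gcd(2^{k}+2s+1,2^{m}-1)=1$, is one of the hypotheses, so it remains to verify condition (2): that $x^{2^{k}+2s+1}h(x)^{2^{m}-1}$ permutes $U$. For $x\in U$ one has $x^{2^{m}+1}=1$, hence $x^{2^{m}}=x^{-1}$, and since $h$ has coefficients in $\mathbb{F}_{2}$ the Frobenius gives $h(x)^{2^{m}}=h(x^{2^{m}})=h(x^{-1})$. Therefore
\begin{equation*}
	x^{2^{k}+2s+1}h(x)^{2^{m}-1}=\frac{x^{2^{k}+2s+1}h(x)^{2^{m}}}{h(x)}=\frac{x^{2^{k}+2s+1}h(x^{-1})}{h(x)}.
\end{equation*}
Here the only computation with any substance is confirming that $x^{2^{k}+2s+1}h(x^{-1})$ equals the numerator of (\ref{eq 14}): multiplying $x^{2^{k}+2s+1}$ into each term $x^{-e}$ of $h(x^{-1})$ and reading off the resulting exponents should recover exactly $x^{2^{k}+2s+1}+x^{2^{k}+2s}+x^{2^{k}+s+1}+x^{2^{k}+s}+x^{2^{k}+1}+x^{2^{k}}+x^{2s+1}+x^{s+1}+x$.

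Having identified $x^{2^{k}+2s+1}h(x)^{2^{m}-1}$ with the fractional polynomial (\ref{eq 14}), I would invoke Lemma \ref{lemma 7}, whose hypothesis $\gcd(2^{k}+1,2^{m}+1)=1$ is again assumed in the theorem, to conclude that this map permutes $U$. By Lemma \ref{lemma 1} this yields that (\ref{eq 17}) permutes $\mathbb{F}_{2^{2m}}$, finishing the argument. There is no genuine obstacle here: the proof is structurally identical to that of Theorem \ref{theorem 1}, and the only thing that could go wrong is an arithmetic slip in the two exponent-matching verifications. The one point to keep straight is that the two $\gcd$ hypotheses play distinct roles—$\gcd(2^{k}+2s+1,2^{m}-1)=1$ supplies condition (1) of Lemma \ref{lemma 1}, while $\gcd(2^{k}+1,2^{m}+1)=1$ is what Lemma \ref{lemma 7} (via Lemma \ref{lemma 5}) needs for the fractional permutation over $U$.
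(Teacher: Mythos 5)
Your proposal is correct and follows exactly the same route as the paper's own proof: write (\ref{eq 17}) as $x^{2^{k}+2s+1}h(x^{2^{m}-1})$ with $h$ the denominator of (\ref{eq 14}), reduce to permuting $U$ via Lemma \ref{lemma 1}, identify $x^{2^{k}+2s+1}h(x)^{2^{m}-1}$ on $U$ with the fractional polynomial (\ref{eq 14}), and conclude by Lemma \ref{lemma 7}. The exponent-matching computations you flag as the only substantive checks do work out, and your proposal is in fact slightly more explicit about them (and about the role of each gcd hypothesis) than the paper is.
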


\begin{proof}
	The polynomial (\ref{eq 17}) can be written as $ x^{2^{k}+2s+1}h(x^{2^{m}-1}) $, where $ h(x)=x^{2^{k}+2s}+x^{2^{k}+s}+x^{2^{k}}+x^{2s+1}+x^{2s}+x^{s+1}+x^{s}+x+1 $. According to Lemma \ref{lemma 1}, the polynomial (\ref{eq 17}) permutes $ \mathbb{F}_{2^{2m}} $ if and only if $ x^{2^{k}+2s+1}h(x)^{2^{m}-1} $ permutes $ U $. For $ x \in U $, $ x^{2^{k}+2s+1}h(x)^{2^{m}-1} $ can be written as the fractional polynomial (\ref{eq 14}). According to Lemma \ref{lemma 7}, $ x^{2^{k}+2s+1}h(x)^{2^{m}-1} $ permutes $ U $, thus the polynomial (\ref{eq 17}) is a class of permutation polynomials over $ \mathbb{F}_{2^{2m}} $. 
	
	The proof is complete. \qed
\end{proof}

A known class of pemutation polynomials is the special case of the class of permutation polynomials (\ref{eq 17}):

1. (\cite{Ref13} Theorem 4.2) $ k=2 $, $ s=1 $: 
\begin{equation*}
	x^{7}+x^{3\cdot2^{m}+4}+x^{4\cdot2^{m}+3}+x^{5\cdot2^{m}+2}+x^{6\cdot2^{m}+1}.
\end{equation*}

\begin{theorem}\label{theorem 4}
	Let $ m $ be a positive even integer, $ k,i $ be positive integers, $ s,u $ be integers, $ Q=2^{m}+1 $, $ K=2^{k} $, $ I=2i $, $ \gcd(i,Q)=1 $, $ \gcd(K+1,Q)=1 $, $ \gcd(K+2s+1,Q)=1 $, and $ \gcd(d_{1},2^{2m}-1)=1 $, then the polynomial
	\begin{equation}\label{eq 18}
		\sum_{j=1}^{9}x^{d_{j}}, where 
		\left\{
		\begin{aligned}
			&d_{1}=(K+2s+1)i+uQ\\
			&d_{2}=(K+2s+1)i+uQ+(Q-2)i\\
			&d_{3}=(K+2s+1)i+uQ+(Q-2)si\\
			&d_{4}=(K+2s+1)i+uQ+(Q-2)(s+1)i\\
			&d_{5}=(K+2s+1)i+uQ+(Q-2)2si\\
			&d_{6}=(K+2s+1)i+uQ+(Q-2)(2s+1)i\\
			&d_{7}=(K+2s+1)i+uQ+(Q-2)Ki\\
			&d_{8}=(K+2s+1)i+uQ+(Q-2)(K+s)i\\
			&d_{9}=(K+2s+1)i+uQ+(Q-2)(K+2s)i
		\end{aligned}
		\right.
	\end{equation}
	is a class of permutation polynomials over $ \mathbb{F}_{2^{2m}} $.
\end{theorem}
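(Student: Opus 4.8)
The plan is to follow the argument of Theorem~\ref{theorem 2} verbatim in structure, replacing the fractional permutation polynomial of Lemma~\ref{lemma 6} by the one of Lemma~\ref{lemma 7}. First I would note that, since $Q=2^m+1$ is odd and $\gcd(i,Q)=1$, the map $x\mapsto x^I$ permutes $U$. Writing $\alpha(x)$ for the fractional polynomial (\ref{eq 14}), Lemma~\ref{lemma 7} (this is where the hypothesis $\gcd(K+1,Q)=1$, i.e. $\gcd(2^k+1,2^m+1)=1$, enters) shows $\alpha$ permutes $U$, hence so does $\alpha(x^I)$. Thus for every $\beta\in U$ the equation $\alpha(x^I)=\beta$ has a unique solution $x\in U$.

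Next I would clear denominators. With $N(x)$ and $D(x)$ the numerator and denominator of (\ref{eq 14}), the equation reads $N(x^I)+\beta D(x^I)=0$ in characteristic $2$. Substituting $x=\beta^t y$ with $y\in U$ sends $x^{eI}\mapsto\beta^{etI}y^{eI}$, producing a relation in $y$ whose coefficients are powers of $\beta$. Using $\gcd(i,Q)=1$ and $\gcd(K+2s+1,Q)=1$ I fix $t\equiv 1/\bigl((K+2s+1)I\bigr)\pmod Q$, so that $(K+2s+1)tI\equiv 1\pmod Q$, and then divide through by $\beta^{(K+2s+1)tI}y^{(K+2s+1)i}$. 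Because $I-i=i$, this recenters the $y$-exponents symmetrically about $0$ and, after collecting the twelve resulting monomials, should collapse the relation into the shape
\begin{equation*}
	g(y)+g(y)^{2^m}=0,
\end{equation*}
which therefore has a unique solution in $U$. Here $g$ is the exact analogue of the $g$ in Theorem~\ref{theorem 2}, carrying nine $\beta$-power coefficients and the nine exponents (in multiples of $i$) $K+2s+1,\,K+2s-1,\,K+1,\,K-1,\,K-2s+1,\,K-2s-1,\,-K+2s+1,\,-K+1,\,-K-2s+1$; the last three differ from the corresponding list in Theorem~\ref{theorem 2} precisely because $N,D$ now arise from $x^{2^k+1}+x^{2^k}+x$ and $x^{2^k}+x+1$ instead of $x^{2^k+1}+x^{2^k}+1$ and $x^{2^k+1}+x+1$.

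Finally I would set $\beta=\delta^{(Q-2)a}$ for $\delta\in\mathbb{F}_{2^n}^{*}$ and match $g(\lambda)$ against the template $\lambda^{d_1}+\sum_{j=2}^{9}\delta^{d_1-d_j}\lambda^{d_j}$ required by Lemmas~\ref{lemma 2} and~\ref{lemma 3}. This yields a system consisting of the residues $d_j\bmod Q$ together with the relations for $d_1-d_j$ modulo $Q(Q-2)$, exactly parallel to (\ref{eq 48})--(\ref{amend 2}); solving it forces $a\equiv 1/(2t)\pmod Q$ and returns precisely the exponents in (\ref{eq 18}). Since $g(\lambda)+g(\lambda)^{2^m}=0$ has a single root in $U$, Lemma~\ref{lemma 3} gives $N(\omega_2,\dots,\omega_9)=1$, so the exponential sum vanishes for every $\delta$; Lemma~\ref{lemma 2}, whose hypothesis $\gcd(d_1,2^{2m}-1)=1$ is assumed, then shows $\sum_{j=1}^{9}x^{d_j}$ permutes $\mathbb{F}_{2^{2m}}$, and reducing exponents modulo $Q(Q-2)=2^{2m}-1$ yields (\ref{eq 18}).

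The main obstacle is the bookkeeping of the middle step: one must confirm that after the substitution and the single division the twelve $\beta$-coefficients assemble exactly into $g+g^{2^m}$. The decisive identity $(K+2s+1)tI\equiv 1\pmod Q$ makes each mutual-negative pair of exponents combine into a coefficient of the form $\beta^{\ast}(1+\beta)$ and makes the three ``unpaired'' central exponents $(K+2s+1)i,(K+1)i,(K-2s+1)i$ (and their three conjugates) match singly. A point requiring care, and the one genuine difference from Theorem~\ref{theorem 2}, is that the numerator $N$ of (\ref{eq 14}) has no constant term (its lowest monomial is $x$, its top monomial $x^{2^k+2s+1}$): consequently the top-degree and constant contributions of the cleared equation are now single $\beta$-powers rather than multiples of $(1+\beta)$, so the top and bottom exponents fall among the unpaired ones instead of forming a mutual-negative pair. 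Once this alignment and the value $a\equiv 1/(2t)\pmod Q$ are checked, the appeal to Lemmas~\ref{lemma 2} and~\ref{lemma 3} is routine.
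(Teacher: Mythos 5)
Your proposal follows the paper's proof of Theorem \ref{theorem 4} essentially verbatim: the same reduction of $\alpha_{2}(x^{I})=\beta$ via the substitution $x=\beta^{t}y$ with $t\equiv 1/\bigl((K+2s+1)I\bigr)\pmod Q$, the same nine exponents in $g$, the same choice $\beta=\delta^{(Q-2)a}$ with $a\equiv 1/(2t)\pmod Q$, and the same appeal to Lemmas \ref{lemma 2} and \ref{lemma 3}; the paper itself just says ``similar to the proof of Theorem \ref{theorem 2}'' at the matching step. Your side remark about the top and constant terms is a coefficient observation (single $\beta$-power versus $(1+\beta)$ times a power) rather than a statement about the exponents ceasing to be mutual negatives, but this does not affect the argument.
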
	

\begin{proof}
	Since $ \gcd(i,Q)=1 $, $ x^I $ is a permutation polynomial over $ U $. According to Lemma \ref{lemma 7}, the fractional polynomial (\ref{eq 14}) permutes $ U $. We denote the fractional polynomial (\ref{eq 14}) by $ \alpha_{2}(x) $, then $ \alpha_{2}(x^I) $ permutes $ U $, thus, let $ \beta \in U $, the equation
	\begin{equation*}
		\alpha_{2}(x^I)=\beta
	\end{equation*}
	has only one solution in $ U $.
	
	For all $ x \in U $, $ x=\beta^{t}y $, where $ y \in U $ and $ t $ is an integer, thus the above equation can be written as
	\begin{equation*}
		\begin{split}
			&\beta^{(K+2s+1)tI}y^{(K+2s+1)I}+\left[\beta^{(K+2s)tI}+\beta^{(K+2s)tI+1}\right]y^{(K+2s)I}\\+
			&\beta^{(K+s+1)tI}y^{(K+s+1)I}+\left[\beta^{(K+s)tI}+\beta^{(K+s)tI+1}\right]y^{(K+s)I}\\+
			&\beta^{(K+1)tI}y^{(K+1)I}+\left[\beta^{KtI}+\beta^{KtI+1}\right]y^{KI}\\+
			&\left[\beta^{(2s+1)tI}+\beta^{(2s+1)tI+1}\right]y^{(2s+1)I}+\beta^{2stI+1}y^{2sI}\\+
			&\left[\beta^{(s+1)tI}+\beta^{(s+1)tI+1}\right]y^{(s+1)I}+\beta^{stI+1}y^{sI}\\+
			&\left[\beta^{tI}+\beta^{tI+1}\right]y^{I}+\beta=0.
		\end{split}
	\end{equation*}
	
	Since $ \gcd(i,Q)=1 $ and $ \gcd(K+2s+1,Q)=1 $, there exists $ t \equiv \frac{1}{KI+2sI+I} \pmod Q $. Dividing both sides of the above equation by $ \beta^{(K+2s+1)tI}y^{(K+2s+1)i} $, then the equation
	\begin{equation*}
		g(y)+g(y)^{2^{m}}=0
	\end{equation*}
	has only one solution in $ U $, where
	\begin{equation*}
		\begin{split}
			g(y)=
			&y^{(K+2s+1)i}+\beta^{-tI}y^{(K+2s-1)i}+\beta^{-stI}y^{(K+1)i}\\+
			&\beta^{-(s+1)tI}y^{(K-1)i}+\beta^{-2stI}y^{(K-2s+1)i}\\+
			&\beta^{-(2s+1)tI}y^{(K-2s-1)i}+\beta^{-KtI}y^{(-K+2s+1)i}\\+
			&\beta^{-(K+s)tI}y^{(-K+1)i}+\beta^{-(K+2s)tI}y^{(-K-2s+1)i}.
		\end{split}
	\end{equation*}
	
	Let $ a $ be an integer, then $ \beta $ can be written as $ \delta^{(Q-2)a} $ where $ \delta \in \mathbb{F}_{2^{n}}^{*} $. According to the above equation, the equation
	\begin{equation*}
		g(\lambda)+g(\lambda)^{2^{m}}=0
	\end{equation*}
	has only one solution in $ U $, where
	\begin{equation*}
		\begin{split}
			g(\lambda)=
			&\lambda^{(K+2s+1)i}+\delta^{-(Q-2)atI}\lambda^{(K+2s-1)i}+\delta^{-(Q-2)astI}\lambda^{(K+1)i}\\+
			&\delta^{-(Q-2)(s+1)atI}\lambda^{(K-1)i}+\delta^{-(Q-2)2satI}\lambda^{(K-2s+1)i}\\+
			&\delta^{-(Q-2)(2s+1)atI}\lambda^{(K-2s-1)i}+\delta^{-(Q-2)KatI}\lambda^{(-K+2s+1)i}\\+
			&\delta^{-(Q-2)(K+s)atI}\lambda^{(-K+1)i}+\delta^{-(Q-2)(K+2s)atI}\lambda^{(-K-2s+1)i}.
		\end{split}
	\end{equation*}
	
	Inspired by Lemma \ref{lemma 2} and Lemma \ref{lemma 3}, we hope that $ g(\lambda) $ has the form of $ \lambda^{d_{1}}+\sum_{j=2}^{9}\delta^{d_{1}-d_{j}}\lambda^{d_{j}} $. Similar to the proof of Theorem \ref{theorem 2}, we can conclude that the equation
	\begin{equation*}
		\lambda^{d_{1}}+\sum_{j=2}^{9}\delta^{d_{1}-d_{j}}\lambda^{d_{j}}+\left(\lambda^{d_{1}}+\sum_{j=2}^{9}\delta^{d_{1}-d_{j}}\lambda^{d_{j}}\right)^{2^m}=0
	\end{equation*}
	has only one solution in $ U $, where
	\begin{equation*}
		\left\{
		\begin{aligned}
			&d_{1} \equiv (K+2s+1)i+uQ \pmod{Q(Q-2)}\\
			&d_{2} \equiv (K+2s+1)i+uQ+(Q-2)i \pmod{Q(Q-2)}\\
			&d_{3} \equiv (K+2s+1)i+uQ+(Q-2)si \pmod{Q(Q-2)}\\
			&d_{4} \equiv (K+2s+1)i+uQ+(Q-2)(s+1)i \pmod{Q(Q-2)}\\
			&d_{5} \equiv (K+2s+1)i+uQ+(Q-2)2si \pmod{Q(Q-2)}\\
			&d_{6} \equiv (K+2s+1)i+uQ+(Q-2)(2s+1)i \pmod{Q(Q-2)}\\
			&d_{7} \equiv (K+2s+1)i+uQ+(Q-2)Ki \pmod{Q(Q-2)}\\
			&d_{8} \equiv (K+2s+1)i+uQ+(Q-2)(K+s)i \pmod{Q(Q-2)}\\
			&d_{9} \equiv (K+2s+1)i+uQ+(Q-2)(K+2s)i \pmod{Q(Q-2)}
		\end{aligned}
		\right.
	\end{equation*}
	
	According to Lemma \ref{lemma 2} and Lemma \ref{lemma 3}, $ \sum_{j=1}^{9}x^{d_{j}} $ is a class of permutation polynomials over $ \mathbb{F}_{2^{2m}} $. For $ x \in \mathbb{F}_{2^{2m}} $, $ x^{d_{j}}=x^{d_{j} \pmod {Q(Q-2})} $ since $ Q(Q-2)=2^{2m}-1 $. Thus, the polynomial (\ref{eq 18}) is a class of permutation polynomials over $ \mathbb{F}_{2^{2m}} $. 
	
	The proof is complete. \qed
\end{proof}

The polynomial (\ref{eq 18}) is a large class of permutation polynomials. Some known classes of pemutation polynomials are the special cases of the class of permutation polynomials (\ref{eq 18}):

1. (\cite{Ref14} Theorem 3.2) $ k=1 $, $ s=0 $, $ u=1 $, $ i=1 $: 
\begin{equation*}
	x^{2^{m}+4}+x^{2^{m+1}+3}+x^{5\cdot2^{m}}.
\end{equation*}

2. (\cite{Ref13} Theorem 4.9) $ k=3 $, $ s=0 $, $ u=-1 $, $ i=2^{m} $: 
\begin{equation*}
	x^{7}+x^{7\cdot2^{m}}+x^{8\cdot2^{m}-1}.
\end{equation*}

3. (\cite{Ref6} Theorem 2) $ s=0 $, $ u=0 $, $ i \equiv 1/(2^{k}+1) \pmod {2^{m}+1} $: 
\begin{equation*}
	x+x^{1+(2^{m}-1)/(2^{k}+1)}+x^{1+2^{k}(2^{m}-1)/(2^{k}+1)}.
\end{equation*}

(i) (\cite{Ref7} Theorem 6) $ k=2 $:
\begin{equation*}
	x+x^{1+(2^{m}-1)/5}+x^{1+4(2^{m}-1)/5}.
\end{equation*}

4. (Theorem \ref{theorem 3}) $ u=0 $, $ i=1 $: 
\begin{equation*}
	\begin{split}	
		&x^{2^{k+m}+s2^{m+1}+1}+x^{2^{k+m}+s2^{m}+s+1}+x^{2^{k+m}+2s+1}+x^{s2^{m+1}+2^{m}+2^{k}}+\\
		&x^{s2^{m+1}+2^{k}+1}+x^{s2^{m}+2^{m}+2^{k}+s}+x^{s2^{m}+2^{k}+s+1}+x^{2^{m}+2^{k}+2s}+x^{2^{k}+2s+1}.
	\end{split}
\end{equation*}

(i) (\cite{Ref13} Theorem 4.2) $ k=2 $, $ s=1 $: 
\begin{equation*}
	x^{7}+x^{3\cdot2^{m}+4}+x^{4\cdot2^{m}+3}+x^{5\cdot2^{m}+2}+x^{6\cdot2^{m}+1}.
\end{equation*}

\begin{lemma}\label{lemma 8}
	Let $ m,k $ be positive even integers, and $ v_{2}(k) \leq v_{2}(m) $, then the fractional polynomial 
	\begin{equation}\label{eq 27}
		\frac{\sum_{i=1}^{2^{k}}x^{2^{k}-i}}{\sum_{j=1}^{2^{k}}x^{2^{k}-j}}
	\end{equation}
	with $ i \equiv 0,1 \pmod 3 $ and $ j \equiv 1,2 \pmod 3 $ is a class of permutation polynomials over $ U $.
\end{lemma}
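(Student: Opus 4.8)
The plan is to show that, in spite of its forbidding appearance, the fractional polynomial (\ref{eq 27}) collapses on $U$ to the fraction $\frac{x^{2^{k}+1}+x^{2^{k}}+1}{x^{2^{k}+1}+x+1}$ already treated in Lemma \ref{lemma 4}, and then simply to invoke that lemma. The whole proof thus reduces to an algebraic simplification followed by a number-theoretic gcd check.

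First I would clean up the exponent sets by the substitution $e=2^{k}-i$ in the numerator (and $f=2^{k}-j$ in the denominator). As $i$ runs over $\{1,\dots,2^{k}\}$ the exponent $e$ runs over $\{0,1,\dots,2^{k}-1\}$, and since $k$ is even we have $2^{k}\equiv 1\pmod 3$, so the conditions $i\equiv 0,1\pmod 3$ and $j\equiv 1,2\pmod 3$ translate into $e\equiv 0,1\pmod 3$ and $f\equiv 0,2\pmod 3$. Writing $S_{0},S_{1},S_{2}$ for the three residue-class pieces of $\sum_{e=0}^{2^{k}-1}x^{e}$, the numerator becomes $N(x)=S_{0}+S_{1}$ and the denominator $D(x)=S_{0}+S_{2}$. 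Setting $L=(2^{k}-1)/3$ and $T:=\sum_{t=0}^{L-1}x^{3t}=\frac{x^{2^{k}-1}+1}{x^{3}+1}$, one reads off $S_{1}=xT$, $S_{2}=x^{2}T$, and $S_{0}=T+x^{2^{k}-1}$.

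The key step is the pair of polynomial identities
\begin{equation*}
(x^{2}+x+1)\,N(x)=x^{2^{k}+1}+x^{2^{k}}+1,\qquad (x^{2}+x+1)\,D(x)=x^{2^{k}+1}+x+1,
\end{equation*}
which I would derive by multiplying the expressions $N=(1+x)T+x^{2^{k}-1}$ and $D=(1+x^{2})T+x^{2^{k}-1}$ by $x^{2}+x+1$, using $x^{3}+1=(x+1)(x^{2}+x+1)$ to clear the denominator of $T$ and then collapsing the resulting telescoping sums over $\mathbb{F}_{2}$. I expect this routine but slightly delicate characteristic-$2$ bookkeeping to be the main obstacle; the case $k=2$, where $N=x^{3}+x+1$ and $D=x^{3}+x^{2}+1$ are the two irreducible cubics over $\mathbb{F}_{2}$ (mutual reciprocals), gives a convenient sanity check.

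Finally, because $m$ is even we have $3\nmid 2^{m}+1$, so the primitive cube roots of unity do not lie in $U$ and hence $x^{2}+x+1\neq 0$ for every $x\in U$. Dividing the two identities then yields, on $U$,
\begin{equation*}
\frac{N(x)}{D(x)}=\frac{x^{2^{k}+1}+x^{2^{k}}+1}{x^{2^{k}+1}+x+1}.
\end{equation*}
The hypothesis $v_{2}(k)\le v_{2}(m)$ forces $\gcd(2^{k}-1,2^{m}+1)=1$ (the same implication already used in the proof of Lemma \ref{lemma 6}), so Lemma \ref{lemma 4} applies and the right-hand side permutes $U$. Consequently (\ref{eq 27}) permutes $U$ as well, which completes the argument.
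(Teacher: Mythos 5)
Your proposal is correct and follows essentially the same route as the paper: both factor $x^{2}+x+1$ out of $x^{2^{k}+1}+x^{2^{k}}+1$ and $x^{2^{k}+1}+x+1$ to identify the quotients with the two residue-class sums, note that $x^{2}+x+1$ has no root in $U$ because $m$ is even, and reduce to Lemma \ref{lemma 4} via $\gcd(2^{k}-1,2^{m}+1)=1$. The only cosmetic difference is that the paper establishes the common factor by a Euclidean gcd computation and then asserts the cofactors, whereas you verify the two product identities directly via the telescoping sum $T$, which if anything makes the factorization step more explicit.
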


\begin{proof}
	Since $ k $ is even, $ 2^{k} \equiv 1 \pmod 3 $. Thus, $ x^{2^{k}} \equiv x \pmod {x^{3}+1} $ over $ \mathbb{F}_{2^{2m}} $. Therefore,
	\begin{equation*}
		\begin{split}
			&\gcd(x^{2^{k}+1}+x^{2^{k}}+1,x^{2^{k}+1}+x+1)\\=
			&\gcd(x^{2^{k}}+x,x^{2^{k}+1}+x+1)\\=
			&\gcd(x^{2^{k}}+x,x(x^{2^{k}}+x)+x^{2}+x+1)\\=
			&\gcd(x^{2^{k}}+x,x^{2}+x+1)\\=
			&\gcd(x^{3}+1,x^{2}+x+1)\\=
			&x^{2}+x+1.
		\end{split}
	\end{equation*}
	
	According to the above equation, we have
	\begin{equation*}
		\frac{x^{2^{k}+1}+x^{2^{k}}+1}{x^{2^{k}+1}+x+1}=\frac{(x^{2}+x+1)\sum_{i=1}^{2^{k}}x^{2^{k}-i}}{(x^{2}+x+1)\sum_{j=1}^{2^{k}}x^{2^{k}-j}},
	\end{equation*}
	where $ i \equiv 0,1 \pmod 3 $ and $ j \equiv 1,2 \pmod 3 $.
	
	Obviously, $ x^{2}+x+1=0 $ has no solution in $ U $ if $ m $ is even, thus
	\begin{equation*}
		\frac{x^{2^{k}+1}+x^{2^{k}}+1}{x^{2^{k}+1}+x+1}=\frac{\sum_{i=1}^{2^{k}}x^{2^{k}-i}}{\sum_{j=1}^{2^{k}}x^{2^{k}-j}},
	\end{equation*}
	where $ i \equiv 0,1 \pmod 3 $ and $ j \equiv 1,2 \pmod 3 $.
	
	According to Lemma \ref{lemma 4}, the fractional polynomial (\ref{eq 27}) is a class of permutation polynomials over $ U $.
	
	The proof is complete.\qed
\end{proof}	

\begin{lemma}\label{lemma 9}
	Let $ m $ be a positive even integer, $ k $ be a positive odd integer, and $ \gcd(2^{k}+1,2^{m}+1)=1 $, then the fractional polynomial 
	\begin{equation}\label{eq 31}
		\frac{\sum_{i=1}^{2^{k}}x^{2^{k}-i}}{\sum_{j=1}^{2^{k}}x^{2^{k}-j}}
	\end{equation}
	with $ i \equiv 0,1 \pmod 3 $ and $ j \equiv 0,2 \pmod 3 $ is a class of permutation polynomials over $ U $.
\end{lemma}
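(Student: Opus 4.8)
The plan is to follow the proof of Lemma~\ref{lemma 8} in structure, but to start from the permutation polynomial $\frac{x^{2^{k}+1}+x^{2^k}+x}{x^{2^{k}}+x+1}$ of Lemma~\ref{lemma 5} rather than from that of Lemma~\ref{lemma 4}. The only arithmetic difference is that here $k$ is \emph{odd}, so $2^{k}\equiv 2 \pmod 3$ instead of $2^{k}\equiv 1 \pmod 3$; this is precisely what shifts the surviving residue classes and accounts for the condition $j\equiv 0,2 \pmod 3$ in the denominator replacing $j\equiv 1,2 \pmod 3$ of Lemma~\ref{lemma 8}.

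First I would run the Euclidean algorithm over $\mathbb{F}_{2}[x]$ on the numerator and denominator of the Lemma~\ref{lemma 5} polynomial. Writing $x^{2^{k}+1}+x^{2^k}+x = x\,(x^{2^{k}}+x+1)+(x^{2^{k}}+x^{2})$ and then $x^{2^{k}}+x+1 = (x^{2^{k}}+x^{2})+(x^{2}+x+1)$, the computation reduces to $\gcd(x^{2^{k}}+x^{2},\,x^{2}+x+1)$. Since $x^{3}\equiv 1 \pmod{x^{2}+x+1}$ and $2^{k}\equiv 2 \pmod 3$, we get $x^{2^{k}}\equiv x^{2}$, hence $x^{2}+x+1 \mid x^{2^{k}}+x^{2}$ and $\gcd(x^{2^{k}+1}+x^{2^k}+x,\,x^{2^{k}}+x+1)=x^{2}+x+1$.

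Next I would divide both numerator and denominator by $x^{2}+x+1$ and identify the quotients. I expect the numerator to become $\sum_{i} x^{2^{k}-i}$ with $i\equiv 0,1 \pmod 3$ and the denominator to become $\sum_{j} x^{2^{k}-j}$ with $j\equiv 0,2 \pmod 3$, which is exactly the content of the statement. Finally, because $m$ is even the polynomial $x^{2}+x+1$ has no root in $U$, so the cancellation is legitimate on $U$; thus (\ref{eq 31}) agrees on $U$ with the Lemma~\ref{lemma 5} polynomial, which permutes $U$ under the hypothesis $\gcd(2^{k}+1,2^{m}+1)=1$, completing the argument.

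The main obstacle is the bookkeeping in the third step: verifying that the long division yields \emph{exactly} the claimed residue classes for the exponents. I would confirm this either by checking that $(x^{2}+x+1)\sum_{e\equiv 0,2 \,(\mathrm{mod}\,3)} x^{e}=x^{2^{k}}+x+1$ telescopes correctly (and similarly for the numerator), or by tracking the single excluded residue class modulo $3$ through the division and using $2^{k}\equiv 2 \pmod 3$ to pin it down. Everything else is a direct transcription of Lemma~\ref{lemma 8}.
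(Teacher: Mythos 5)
Your proposal is correct and follows essentially the same route as the paper's proof: compute $\gcd(x^{2^{k}+1}+x^{2^{k}}+x,\,x^{2^{k}}+x+1)=x^{2}+x+1$ via the Euclidean algorithm using $2^{k}\equiv 2\pmod 3$, cancel that common factor (legitimate on $U$ since $x^{2}+x+1$ has no root there when $m$ is even), and invoke Lemma~\ref{lemma 5}. The quotient identification you defer to "bookkeeping" is exactly what the paper also asserts without detail, and your proposed check $(x^{2}+x+1)\sum_{e\equiv 0,2\ (\mathrm{mod}\ 3)}x^{e}=x^{2^{k}}+x+1$ does telescope as you expect.
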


\begin{proof}
	Since $ k $ is odd, $ 2^{k} \equiv 2 \pmod 3 $. Thus, $ x^{2^{k}} \equiv x^{2} \pmod {x^{3}+1} $ over $ \mathbb{F}_{2^{2m}} $. Therefore,
	\begin{equation*}
		\begin{split}
			&\gcd(x^{2^{k}+1}+x^{2^{k}}+x,x^{2^{k}}+x+1)\\=
			&\gcd(x^{2^{k}+1}+1,x^{2^{k}}+x+1)\\=
			&\gcd(x(x^{2^{k}}+x+1)+x^{2}+x+1,x^{2^{k}}+x+1)\\=
			&\gcd(x^{2}+x+1,x^{2^{k}}+x+1)\\=
			&\gcd(x^{2}+x+1,x^{2}+x+1)\\=
			&x^{2}+x+1.
		\end{split}
	\end{equation*}
	
	According to the above equation, we have
	\begin{equation*}
		\frac{x^{2^{k}+1}+x^{2^{k}}+x}{x^{2^{k}}+x+1}=\frac{(x^{2}+x+1)\sum_{i=1}^{2^{k}}x^{2^{k}-i}}{(x^{2}+x+1)\sum_{j=1}^{2^{k}}x^{2^{k}-j}},
	\end{equation*}
	where $ i \equiv 0,1 \pmod 3 $ and $ j \equiv 0,2 \pmod 3 $.
	
	Obviously, $ x^{2}+x+1=0 $ has no solution in $ U $ if $ m $ is even, thus
	\begin{equation*}
		\frac{x^{2^{k}+1}+x^{2^{k}}+x}{x^{2^{k}}+x+1}=\frac{\sum_{i=1}^{2^{k}}x^{2^{k}-i}}{\sum_{j=1}^{2^{k}}x^{2^{k}-j}},
	\end{equation*}
	where $ i \equiv 0,1 \pmod 3 $ and $ j \equiv 0,2 \pmod 3 $.
	
	According to Lemma \ref{lemma 5}, the fractional polynomial (\ref{eq 31}) is a class of permutation polynomials over $ U $. 
	
	The proof is complete.\qed
\end{proof}	

\begin{theorem}\label{theorem 5}
	Let $ m $ be a positive even integer, $ k $ be a positive odd integer, $ \gcd(2^{k}-1,2^{m}-1)=1 $, $ \gcd(2^{k}+1,2^{m}+1)=1 $, and $ j \equiv 0,2 \pmod 3 $, then the polynomial
	\begin{equation}\label{eq 35}
		\sum_{j=1}^{2^{k}}x^{2^{k+m}-j2^{m}+j-1}
	\end{equation} 
	is a class of permutation polynomials over $ \mathbb{F}_{2^{2m}} $.
\end{theorem}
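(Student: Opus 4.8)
The plan is to imitate the proofs of Theorems~\ref{theorem 1} and~\ref{theorem 3}: rewrite the polynomial (\ref{eq 35}) in the form $x^{r}h(x^{2^{m}-1})$ and apply Lemma~\ref{lemma 1}, thereby reducing the question to the permutation behaviour of an associated fractional map on $U$, which will turn out to be exactly the fractional polynomial (\ref{eq 31}) supplied by Lemma~\ref{lemma 9}.

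First I would recast the exponents. Writing each exponent as
$$2^{k+m}-j2^{m}+j-1=2^{m}(2^{k}-j)+(j-1)=(2^{m}-1)(2^{k}-j)+(2^{k}-1),$$
I read off $r=2^{k}-1$ and $h(x)=\sum_{j=1}^{2^{k}}x^{2^{k}-j}$ with $j\equiv 0,2\pmod{3}$, so that (\ref{eq 35}) equals $x^{2^{k}-1}h(x^{2^{m}-1})$. Here $h$ is precisely the denominator polynomial of Lemma~\ref{lemma 9}, and by that lemma's proof $h(x)=(x^{2^{k}}+x+1)/(x^{2}+x+1)$. With $q=2^{2m}$, $d=2^{m}+1$ and $(q-1)/d=2^{m}-1$, Lemma~\ref{lemma 1} then says that (\ref{eq 35}) permutes $\mathbb{F}_{2^{2m}}$ if and only if (1) $\gcd(2^{k}-1,2^{m}-1)=1$ and (2) $x^{2^{k}-1}h(x)^{2^{m}-1}$ permutes $U$. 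Condition (1) is one of the hypotheses, so only (2) remains.

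For (2) I would use that $x^{2^{m}}=x^{-1}$ for $x\in U$, which gives
$$x^{2^{k}-1}h(x)^{2^{m}-1}=\frac{x^{2^{k}-1}h(x)^{2^{m}}}{h(x)}=\frac{x^{2^{k}-1}h(x^{-1})}{h(x)}.$$
Substituting the closed form of $h$ and clearing denominators shows $x^{2^{k}-1}h(x^{-1})=(x^{2^{k}+1}+x^{2^{k}}+x)/(x^{2}+x+1)$, so the whole expression collapses to $(x^{2^{k}+1}+x^{2^{k}}+x)/(x^{2^{k}}+x+1)$, i.e.\ the fractional polynomial (\ref{eq 31}). Since $k$ is odd and $\gcd(2^{k}+1,2^{m}+1)=1$, Lemma~\ref{lemma 9} guarantees this permutes $U$, establishing (2) and hence the theorem.

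The routine-but-delicate part is the exponent bookkeeping in the middle step: verifying the factorization $2^{k+m}-j2^{m}+j-1=(2^{m}-1)(2^{k}-j)+(2^{k}-1)$, and then checking that the map $x\mapsto x^{-1}$ followed by multiplication by $x^{2^{k}-1}$ sends the denominator of (\ref{eq 31}) onto its numerator. This last point hinges on $2^{k}\equiv 2\pmod{3}$ (as $k$ is odd), which flips the residue classes $\{0,2\}$ to $\{0,1\}$ in the required way. Passing through the closed form $h(x)=(x^{2^{k}}+x+1)/(x^{2}+x+1)$ rather than through the explicit sums sidesteps most of this mod-$3$ tracking and is the cleaner route.
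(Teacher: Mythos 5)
Your proposal is correct and follows essentially the same route as the paper: write (\ref{eq 35}) as $x^{2^{k}-1}h(x^{2^{m}-1})$, apply Lemma \ref{lemma 1} (with $\gcd(2^{k}-1,2^{m}-1)=1$ handling condition (1)), and reduce $x^{2^{k}-1}h(x)^{2^{m}-1}$ on $U$ to the fractional polynomial (\ref{eq 31}) covered by Lemma \ref{lemma 9}. The only difference is that you spell out, via the closed form $h(x)=(x^{2^{k}}+x+1)/(x^{2}+x+1)$, the exponent bookkeeping that the paper dispatches with the single sentence ``Since $k$ is odd, \dots can be written as the fractional polynomial (\ref{eq 31})''.
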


\begin{proof}
	The polynomial (\ref{eq 35}) can be written as $ x^{2^{k}-1}h(x^{2^{m}-1}) $, where $ h(x)=\sum_{j=1}^{2^{k}}x^{2^{k}-j} $. According to Lemma \ref{lemma 1}, the polynomial (\ref{eq 35}) permutes $ \mathbb{F}_{2^{2m}} $ if and only if $ x^{2^{k}-1}h(x)^{2^{m}-1} $ permutes $ U $. Since $ k $ is odd, for $ x \in U $, $ x^{2^{k}-1}h(x)^{2^{m}-1} $ can be written as the fractional polynomial (\ref{eq 31}).
	
	According to Lemma \ref{lemma 9}, $ x^{2^{k}-1}h(x)^{2^{m}-1} $ permutes $ U $, thus the polynomial (\ref{eq 35}) is a class of permutation polynomials over $ \mathbb{F}_{2^{2m}} $.
	
	The proof is complete.\qed
\end{proof}

A known class of pemutation polynomials is the special case of the class of permutation polynomials (\ref{eq 35}):

1. (\cite{Ref13} Theorem 3.5) $ k=3 $: 
\begin{equation*}
	x^{7}+x^{2\cdot2^{m}+5}+x^{3\cdot2^{m}+4}+x^{5\cdot2^{m}+2}+x^{6\cdot2^{m}+1}.
\end{equation*}

\begin{theorem}\label{theorem 6}
	Let $ m $ be a positive even integer, $ k,i $ be positive integers, $ u $ be an integer, $ Q=2^{m}+1 $, $ K=2^{k} $, $ I=2i $, $ v_{2}(k) \leq v_{2}(m) $, $ \gcd(i,Q)=1 $, and $ \gcd(d_{1},2^{2m}-1)=1 $, then the polynomial
	\begin{equation}\label{eq 36}
		\sum_{j=1}^{K}x^{d_{j}}, where \ d_{j}=(K-1)i+uQ+(Q-2)(j-1)i \ with \ j \equiv 0,1 \pmod 3 
	\end{equation}
	is a class of permutation polynomials over $ \mathbb{F}_{2^{2m}} $ if one of the following conditions is satisfied:
	
	(i) $ k $ is even or
	
	(ii) $ k $ is odd and $ \gcd(K+1,Q)=1 $.
\end{theorem}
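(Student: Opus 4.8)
The plan is to follow the proofs of Theorems \ref{theorem 2} and \ref{theorem 4} in structure, feeding in Lemma \ref{lemma 8} when $k$ is even and Lemma \ref{lemma 9} when $k$ is odd. First I would record the preliminary facts. Since $Q=2^m+1$ is odd, $\gcd(I,Q)=\gcd(2i,Q)=\gcd(i,Q)=1$, so $x^I$ permutes $U$; and since $Q\mid 2^{2m}-1$, the hypothesis $\gcd(d_1,2^{2m}-1)=1$ forces $\gcd(d_1,Q)=1$, whence $\gcd(K-1,Q)=1$ (using $d_1\equiv(K-1)i\pmod Q$ and $\gcd(i,Q)=1$), which is exactly what lets me invert $(K-1)I$ modulo $Q$ later. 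Under (i), $k$ even with $v_2(k)\le v_2(m)$ gives $\gcd(2^k-1,2^m+1)=1$, so Lemma \ref{lemma 8} applies; under (ii), $k$ odd makes $v_2(k)=0\le v_2(m)$ automatic and $\gcd(K+1,Q)=1$ lets Lemma \ref{lemma 9} apply. In either case the relevant fractional polynomial $\alpha(x)=N(x)/D(x)$ permutes $U$, where $N(x)=\sum_{\ell}x^{K-\ell}$ runs over $1\le\ell\le K$ with $\ell\equiv 0,1\pmod 3$ and $D(x)=\sum_{j'}x^{K-j'}$ runs over $j'\equiv 1,2\pmod 3$ (case (i)) or $j'\equiv 0,2\pmod 3$ (case (ii)). Therefore $\alpha(x^I)$ permutes $U$.

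For each $\beta\in U$ I then write the unique solution of $\alpha(x^I)=\beta$ as $x=\beta^t y$ with $y\in U$, clear denominators to obtain $N((\beta^t y)^I)+\beta D((\beta^t y)^I)=0$, choose $t\equiv 1/((K-1)I)\pmod Q$ so that $\beta^{(K-1)tI}=\beta$, and divide by $\beta^{(K-1)tI}y^{(K-1)i}$. Using $y^{2^m}=y^{-1}$ and $\beta^{2^m}=\beta^{-1}$ on $U$, this rewrites the equation, exactly as in Theorem \ref{theorem 2}, as $g(y)+g(y)^{2^m}=0$ with a unique solution in $U$, where each numerator monomial $x^{K-\ell}$ becomes $\beta^{-tI(\ell-1)}y^{(K-2\ell+1)i}$ in $g$. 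Writing $\beta=\delta^{(Q-2)a}$ with $\delta\in\mathbb{F}_{2^n}^*$ and taking $a\equiv 1/(2t)\pmod Q$, forced by the same computation that produced (\ref{amend 1}) and $a\equiv 1/(2t)$ in Theorem \ref{theorem 2}, the coefficient becomes $\delta^{-(Q-2)(\ell-1)i}$ and the exponent reduces to $(K-2\ell+1)i\pmod Q$. With the identification $j=\ell$ these are precisely $\delta^{d_1-d_j}$ and $d_j\bmod Q$, so $g(\lambda)=\lambda^{d_1}+\sum_{j}\delta^{d_1-d_j}\lambda^{d_j}$. Lemmas \ref{lemma 2} and \ref{lemma 3} then yield that $\sum_{j}x^{d_j}$ permutes $\mathbb{F}_{2^{2m}}$, and reducing exponents modulo $Q(Q-2)=2^{2m}-1$ gives the stated form (\ref{eq 36}).

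The step I expect to be the crux is verifying that the denominator monomials, under the same substitution, supply exactly the reflected exponents $-(K-2\ell+1)i$ that make up $g(y)^{2^m}$, leaving no extra $y$-powers in the cleared equation. This amounts to the bijection $j'\mapsto \ell=K-j'+1$ carrying $J$ onto $\{\ell\equiv 0,1\pmod 3\}$, and it is precisely here that the parity of $k$ enters: for even $k$ one has $2^k\equiv 1\pmod 3$, so $\ell=K-j'+1$ lies in $\{0,1\}\bmod 3$ exactly when $j'\equiv 1,2\pmod 3$, matching Lemma \ref{lemma 8}; for odd $k$ one has $2^k\equiv 2\pmod 3$, so the matching class is $j'\equiv 0,2\pmod 3$, matching Lemma \ref{lemma 9}. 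Confirming this correspondence, and checking that the exponents shared between $N$ and $-N$ recombine through the $(1+\beta)$ coefficients exactly as in Theorem \ref{theorem 2}, is the only genuinely new bookkeeping, and it also explains why both cases collapse to the same polynomial $\sum_j x^{d_j}$.
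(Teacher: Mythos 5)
Your proposal is correct and follows essentially the same route as the paper: Lemma \ref{lemma 8} for even $k$ and Lemma \ref{lemma 9} for odd $k$, the substitution $x=\beta^{t}y$ with $t\equiv 1/((K-1)I)\pmod Q$, reduction to $g(y)+g(y)^{2^m}=0$, the choice $\beta=\delta^{(Q-2)a}$ with $a\equiv 1/(2t)$, and Lemmas \ref{lemma 2}--\ref{lemma 3} to conclude. The only differences are cosmetic: you derive $\gcd(K-1,Q)=1$ from $\gcd(d_1,2^{2m}-1)=1$ where the paper uses $v_2(k)\le v_2(m)$, and you make explicit the index bijection $j'\mapsto K-j'+1$ between the residue classes that the paper leaves implicit in its two-case computation.
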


\begin{proof}
	We consider two cases.
	
	Case 1. When $ k $ is even.
	
	Since $ \gcd(i,Q)=1 $, $ x^I $ is a permutation polynomial over $ U $. According to Lemma \ref{lemma 8}, the fractional polynomial (\ref{eq 27}) permutes $ U $. We denote the fractional polynomial (\ref{eq 27}) by $ \alpha_{3}(x) $, then $ \alpha_{3}(x^I) $ permutes $ U $, thus, let $ \beta \in U $, the equation
	\begin{equation*}
		\alpha_{3}(x^{I})=\beta
	\end{equation*}
	has only one solution in $ U $.
	
	For all $ x \in U $, $ x=\beta^{t}y $, where $ y \in U $ and $ t $ is an integer, thus the above equation can be written as
	\begin{equation*}
		\begin{split}
			&\sum_{w_{1}=1}^{K}\beta^{(K-w_{1})tI}y^{(K-w_{1})I}+\sum_{w_{2}=1}^{K}\beta^{(K-w_{2})tI}y^{(K-w_{2})I}\\+
			&\sum_{w_{3}=1}^{K}\left[\beta^{(K-w_{3})tI}+\beta^{(K-w_{3})tI+1}\right]y^{(K-w_{3})I}=0,
		\end{split}
	\end{equation*}
	where $ w_{1} \equiv 0 \pmod 3 $, $ w_{2} \equiv 2 \pmod 3 $ and $ w_{3} \equiv 1 \pmod 3 $.
	
	Since $ v_{2}(k) \leq v_{2}(m) $, $ \gcd(K-1,Q)=1 $. Since $ \gcd(i,Q)=1 $ and $ \gcd(K-1,Q)=1 $, there exists $ t \equiv \frac{1}{KI-I} \pmod Q $. Dividing both sides of the above equation by $ \beta^{(K-1)tI}y^{(K-1)i} $, then the equation
	\begin{equation}\label{eq 39}
		\begin{split}
			&g(y)+g(y)^{2^{m}}=0,\ where \\
			&g(y)=\sum_{j=1}^{K}\beta^{(1-j)tI}y^{(K-2j+1)i} \  with \ j \equiv 0,1 \pmod 3
		\end{split}
	\end{equation}
	has only one solution in $ U $.
	
	Case 2. When $ k $ is odd.
	
	Since $ \gcd(I,Q)=1 $, $ x^I $ is a permutation polynomial over $ U $. According to Lemma \ref{lemma 9}, the fractional polynomial (\ref{eq 31}) permutes $ U $. We denote the fractional polynomial (\ref{eq 31}) by $ \alpha_{4}(x) $, then $ \alpha_{4}(x^I) $ permutes $ U $, thus, let $ \beta \in U $, the equation
	\begin{equation*}
		\alpha_{4}(x^I)=\beta
	\end{equation*}
	has only one solution in $ U $.
	
	For all $ x \in U $, $ x=\beta^{t}y $, where $ y \in U $ and $ t $ is an integer, thus the above equation can be written as
	\begin{equation*}
		\begin{split}
			&\sum_{w_{1}=1}^{K}\beta^{(K-w_{1})tI}y^{(K-w_{1})I}+\sum_{w_{2}=1}^{K}\beta^{(K-w_{2})tI}y^{(K-w_{2})I}\\+
			&\sum_{w_{3}=1}^{K}\left[\beta^{(K-w_{3})tI}+\beta^{(K-w_{3})tI+1}\right]y^{(K-w_{3})I}=0,
		\end{split}
	\end{equation*}
	where $ w_{1} \equiv 1 \pmod 3 $, $ w_{2} \equiv 2 \pmod 3 $ and $ w_{3} \equiv 0 \pmod 3 $.
	
	Since $ v_{2}(k) \leq v_{2}(m) $, $ \gcd(K-1,Q)=1 $. Since $ \gcd(i,Q)=1 $ and $ \gcd(K-1,Q)=1 $, there exists $ t \equiv \frac{1}{KI-I} \pmod Q $. Dividing both sides of the above equation by $ \beta^{(K-1)tI}y^{(K-1)i} $, then the equation
	\begin{equation}\label{eq 42}
		\begin{split}
			&g(y)+g(y)^{2^{m}}=0,\ where \\ 
			&g(y)=\sum_{j=1}^{K}\beta^{(1-j)tI}y^{(K-2j+1)i} \ with \ j \equiv 0,1 \pmod 3
		\end{split}
	\end{equation}
	has only one solution in $ U $.
	
	We can find that the eqautions (\ref{eq 39}) and (\ref{eq 42}) are same. Let $ a $ be an integer, then $ \beta $ can be written as $ \delta^{(Q-2)a} $ where $ \delta \in \mathbb{F}_{2^{n}}^{*} $. According to the equations (\ref{eq 39}) and (\ref{eq 42}), the equation
	\begin{equation*}
		\begin{split}
			&g(\lambda)+g(\lambda)^{2^{m}}=0,\ where \\ &g(\lambda)=\sum_{j=1}^{K}\delta^{(1-j)(Q-2)atI}\lambda^{(K-2j+1)i} \ with \ j \equiv 0,1 \pmod 3	
		\end{split}
	\end{equation*}
	has only one solution in $ U $.
	
	Inspired by Lemma \ref{lemma 2} and Lemma \ref{lemma 3}, we hope that $ g(\lambda) $ has the form of $ \lambda^{d_{1}}+\sum_{j=2}^{K}\delta^{d_{1}-d_{j}}\lambda^{d_{j}} $. Similar to the proof of Theorem \ref{theorem 2}, we can conclude that the equation
	\begin{equation*}
		\lambda^{d_{1}}+\sum_{j=2}^{K}\delta^{d_{1}-d_{j}}\lambda^{d_{j}}+\left(\lambda^{d_{1}}+\sum_{j=2}^{K}\delta^{d_{1}-d_{j}}\lambda^{d_{j}}\right)^{2^m}=0
	\end{equation*}
	has only one solution in $ U $, where
	\begin{equation*}
		\begin{split}
			&d_{j} \equiv (K-1)i+uQ+(Q-2)(j-1)i \pmod {Q(Q-2)} \\
			&with \ j \equiv 0,1 \pmod 3.
		\end{split}
	\end{equation*}
	
	According to Lemma \ref{lemma 2} and Lemma \ref{lemma 3}, $ \sum_{j=1}^{K}x^{d_{j}} $ with $ j \equiv 0,1 \pmod 3 $ is a class of permutation polynomials over $ \mathbb{F}_{2^{2m}} $. For $ x \in \mathbb{F}_{2^{2m}} $, $ x^{d_{j}}=x^{d_{j} \pmod {Q(Q-2})} $ since $ Q(Q-2)=2^{2m}-1 $. Thus, the polynomial (\ref{eq 36}) is a class of permutation polynomials over $ \mathbb{F}_{2^{2m}} $. 
	
	The proof is complete.\qed
\end{proof}

The polynomial (\ref{eq 36}) is a large class of permutation polynomials. Some known classes of pemutation polynomials are the special cases of the class of permutation polynomials (\ref{eq 36}):

1. (\cite{Ref13} Theorem 3.6) $ k=3 $, $ u=-1 $, $ i=2^{m} $: 
\begin{equation*}
	x^{5}+x^{2^{m}+4}+x^{3\cdot2^{m}+2}+x^{4\cdot2^{m}+1}+x^{6\cdot2^{m}-1}.
\end{equation*}

2. (Theorem \ref{theorem 5}) $ k $ is odd, $ u=0 $, $ i=1 $, $ j=2^{k}+1-l $: 
\begin{equation*}
	\sum_{l=1}^{2^{k}}x^{2^{k+m}-l2^{m}+l-1}.
\end{equation*}

(i) (\cite{Ref13} Theorem 3.5) $ k=3 $: 
\begin{equation*}
	x^{7}+x^{2\cdot2^{m}+5}+x^{3\cdot2^{m}+4}+x^{5\cdot2^{m}+2}+x^{6\cdot2^{m}+1}.
\end{equation*}

\section{EA-inequivalence}\label{sec:4}	
In this section, we prove that, in all classes of permutation polynomials constructed in Section \ref{sec:3}, there exists a permutation polynomial which is EA-inequivalent to known permutation polynomials for all even positive integer $ m $.

\begin{theorem}\label{theorem 8}
	The known classes of permutation polynomials over $ \mathbb{F}_{2^{2m}} $ with a positive integer $ m $ are as follows:
	
	$ f_{1}(x)=x^{4}+x^{2^{m}+3}+x^{3\cdot2^{m}+1} \ (\gcd(m,3)=1) $ {\rm\cite{Ref3}}.
	
	$ f_{2}(x)=x^{2}+x^{2\cdot2^{m}}+x^{3\cdot2^{m}-1} \ (\gcd(m,3)=1) $ {\rm\cite{Ref3}}.
	
	$ f_{3}(x)=x^{5}+x^{2^{m}+4}+x^{4\cdot2^{m}+1} \ (\gcd(m,3)=1) $ {\rm\cite{Ref3}}.
	
	$ f_{4}(x)=x+x^{2^{m}}+x^{2^{2m-1}-2^{m-1}+1} \ (m \not\equiv 0 \pmod 3) $ {\rm\cite{Ref5}}.
	
	$ f_{5}(x)=x+ax^{2^{m+1}-1}+a^{2^{m-1}}x^{2^{2m}-2^{m}+1} $, where $ a \in \mathbb{F}_{2^{2m}} $ and the order of $ a $ is $ 2^{m}+1 $ {\rm\cite{Ref5}}.
	
	$ f_{6}(x)=x+x^{s(2^{m}-1)+1}+x^{t(2^{m}-1)+1} $, where $ k $ is a positive integer, $ k<m $, $ \gcd(2^{k}-1,2^{m}+1)=1 $ and $ (s,t)=(\frac{2^{k}}{2^{k}-1},\frac{-1}{2^{k}-1}) $ {\rm\cite{Ref6}}.
	
	$ f_{7}(x)=x+x^{s(2^{m}-1)+1}+x^{t(2^{m}-1)+1} $, where $ k $ is a positive integer, $ \gcd(2^{k}+1,2^{m}+1)=1 $ and $ (s,t)=(\frac{1}{2^{k}+1},\frac{2^{k}}{2^{k}+1}) $ {\rm\cite{Ref6}}.
	
	$ f_{8}(x)=x+x^{s(2^{m}-1)+1}+x^{t(2^{m}-1)+1} $, where $ m $ is even, $ (s,t)=(-\frac{1}{3},\frac{4}{3}) $ or $ (s,t)=(3,-1) $ or $ (s,t)=(-\frac{2}{3},\frac{5}{3}) $ {\rm\cite{Ref7}}.
	
	$ f_{9}(x)=x+x^{2^{m}(2^{m}-1)+1}+x^{2(2^{m}-1)+1} $ {\rm\cite{Ref11}}.
	
	$ f_{10}(x)=x+x^{2^{m-1}(2^{m}-1)+1}+x^{2^{2m-1}(2^{m}-1)+1} \ (m \ is \ even \ and \ m \geq 3) $ {\rm\cite{Ref12}}.
	
	$ f_{11}(x)=x^{5}+x^{2^{m}+4}+x^{3\cdot2^{m}+2}+x^{4\cdot2^{m}+1}+x^{5\cdot2^{m}} \ (m \not\equiv 0 \pmod 4) $ {\rm\cite{Ref13}}.
	
	$ f_{12}(x)=x^{5}+x^{2^{m}+4}+x^{2\cdot2^{m}+3}+x^{4\cdot2^{m}+1}+x^{5\cdot2^{m}} \ (m \not\equiv 0 \pmod 4) $ {\rm\cite{Ref13}}.
	
	$ f_{13}(x)=x^{7}+x^{2\cdot2^{m}+5}+x^{3\cdot2^{m}+4}+x^{5\cdot2^{m}+2}+x^{6\cdot2^{m}+1} \ (\gcd(m,3)=1) $ {\rm\cite{Ref13}}.
	
	$ f_{14}(x)=x^{5}+x^{2^{m}+4}+x^{3\cdot2^{m}+2}+x^{4\cdot2^{m}+1}+x^{6\cdot2^{m}-1} \ (m \equiv 2 \pmod 4) $ {\rm\cite{Ref13}}.
	
	$ f_{15}(x)=x^{5}+x^{3\cdot2^{m}+2}+x^{4\cdot2^{m}+1} \ (m \equiv 2 \pmod 4) $ {\rm\cite{Ref14}}.
	
	$ f_{16}(x)=x^{2^{m}+4}+x^{2^{m+1}+3}+x^{5\cdot2^{m}} \ (m \equiv 2 \pmod 4) $ {\rm\cite{Ref14}}.
	
	$ f_{17}(x)=x^{5}+x^{2^{m}+4}+x^{5\cdot2^{m}} \ (m \equiv 2 \pmod 4) $ {\rm\cite{Ref14}}.
	
	$ f_{18}(x)=x^{5}+x^{4\cdot2^{m}+1}+x^{5\cdot2^{m}} \ (m \equiv 2 \pmod 4) $ {\rm\cite{Ref14}}.
\end{theorem}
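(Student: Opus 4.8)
The plan is to show that every member $f_1,\dots,f_{18}$ of the list permutes $\mathbb{F}_{2^{2m}}$ under its stated side conditions. Each $f_i$ is already proved to be a permutation polynomial in its originating reference, so the baseline justification is a direct appeal to those works; what I would add is one uniform derivation through Lemma~\ref{lemma 1}, which at the same time records every polynomial in the normalized shape $x^{r}h(x^{2^{m}-1})$ that the algebraic-degree comparison of Section~\ref{sec:4} will rely on.

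First I would normalize. In each fixed $f_i$ all exponents $d$ share one common residue $r$ modulo $2^{m}-1$ (for instance $r=4$ for $f_1$, $r=2$ for $f_2$, $r=1$ for $f_4$--$f_{10}$, $r=5$ for the degree-five families $f_3,f_{11},f_{12},f_{14},f_{15},f_{16},f_{17},f_{18}$, and $r=7$ for $f_{13}$), so that $x^{d}=x^{r}\bigl(x^{2^{m}-1}\bigr)^{(d-r)/(2^{m}-1)}$ puts $f_i$ in the form $x^{r}h_i(x^{2^{m}-1})$ with $h_i$ a short $\{0,1\}$-polynomial (a $\{1,a,a^{2^{m-1}}\}$-polynomial for $f_5$).

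Next I would check the two conditions of Lemma~\ref{lemma 1}. Condition (1), $\gcd(r,2^{m}-1)=1$, is automatic when $r\in\{2,4\}$ because $2^{m}-1$ is odd, and for $r\in\{5,7\}$ it is exactly the stated hypothesis, since $\gcd(5,2^{m}-1)=1\iff 4\nmid m$ and $\gcd(7,2^{m}-1)=1\iff 3\nmid m$. For condition (2) I would use $x^{2^{m}}=x^{-1}$ on $U$ to rewrite $x^{r}h_i(x)^{2^{m}-1}=x^{r}h_i(x^{-1})/h_i(x)$ as a fractional self-map of $U$, and then recognize that fraction as a permutation of $U$ by the factoring technique of Lemmas~\ref{lemma 8}--\ref{lemma 9} together with Lemmas~\ref{lemma 4}--\ref{lemma 5}, precisely as in the originating references.

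The only entries needing extra care are $f_5$ and the five-term families. For $f_5$ the coefficient $a$ has order $2^{m}+1$, hence lies in $U$; after normalization I would absorb it by a substitution $x\mapsto a^{c}x$, and checking that the order condition makes this well defined on $U$ is the main subtlety. For $f_{11}$--$f_{14}$ the polynomial $h_i$ has more terms, so the fraction $h_i(x^{-1})/h_i(x)$ must first be reduced by cancelling a common factor (the $x^{2}+x+1$ trick of Lemmas~\ref{lemma 8}--\ref{lemma 9}) before its permutation property on $U$ becomes visible. I expect no single hard idea but rather the bookkeeping of eighteen gcd-and-fraction checks to be the main obstacle; since all are established results, the cleanest rigorous write-up cites the references and tabulates the normalized exponents for use in Section~\ref{sec:4}.
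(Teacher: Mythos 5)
The paper offers no proof of this statement at all: Theorem~\ref{theorem 8} is a catalogue of previously published permutation polynomials, and its justification in the paper is purely the citation attached to each $f_i$. Your baseline --- appealing directly to the originating references --- is therefore exactly what the paper does, and to that extent the proposal matches. The extra uniform derivation you sketch through Lemma~\ref{lemma 1} goes beyond the paper and is a reasonable plan, but it contains one concrete slip: for the families with $r=5$ you assert that condition (1), namely $\gcd(5,2^{m}-1)=1$, equivalently $4\nmid m$, ``is exactly the stated hypothesis.'' That is true for $f_{11}$ and $f_{12}$ (stated as $m\not\equiv 0\pmod 4$), but not for $f_{3}$, whose stated hypothesis is $\gcd(m,3)=1$ --- a condition on divisibility by $3$, not $4$, so your identification fails there (and in fact exposes that the condition as transcribed in the theorem cannot be the full story, since for $m=4$ condition (1) of Lemma~\ref{lemma 1} already fails); nor for $f_{15}$--$f_{18}$, whose hypothesis $m\equiv 2\pmod 4$ is strictly stronger than $4\nmid m$, the surplus being consumed by condition (2) on $U$, which your sketch delegates back to the references anyway. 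Likewise the substitution $x\mapsto a^{c}x$ you propose for $f_{5}$ needs its own well-definedness check that you only flag. None of this damages the overall claim, since every $f_i$ is proved in its cited source and the paper itself rests on exactly those citations; but if you want the Lemma~\ref{lemma 1} normalization to stand as an independent verification, you must match each polynomial's actual side condition against both parts of the lemma rather than against part (1) alone.
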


In the above known permutation polynomials, $ f_{6}(x) $ is constructed from the fractional permutation polynomial $ \frac{x^{2^{k}+1}+x^{2^{k}}+1}{x^{2^{k}+1}+x+1} $ over $ U $ by using Lemma \ref{lemma 1}, and $ f_{7}(x) $ is constructed from the fractional permutation polynomial $ \frac{x^{2^{k}+1}+x^{2^{k}}+x}{x^{2^{k}}+x+1} $ over $ U $ by using Lemma \ref{lemma 1}. Obviously, the fractional permutation polynomials (\ref{eq 1}) and (\ref{eq 27}) are EA-equivalent to $ \frac{x^{2^{k}+1}+x^{2^{k}}+1}{x^{2^{k}+1}+x+1} $, and the fractional permutation polynomials (\ref{eq 14}) and (\ref{eq 31}) are EA-equivalent to $ \frac{x^{2^{k}+1}+x^{2^{k}}+x}{x^{2^{k}}+x+1} $. However, the next proof will show that, in new classes of permutation polynomials, there exists a permutation polynomial which is EA-inequivalent to $ f_{6}(x) $ and $ f_{7}(x) $ for all even positive integer $ m $.

\begin{theorem}\label{theorem 7}
	In the class of permutation polynomials {\rm(\ref{eq 4})}, there exists a permutation polynomial which is EA-inequivalent to all the known permutation polynomials in Theorem {\rm\ref{theorem 8}} for all even positive integer $ m $. 
\end{theorem}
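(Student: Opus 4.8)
The plan is to use the algebraic degree as an EA-invariant. Since EA-equivalent nonconstant functions have the same algebraic degree (the remark following Definition \ref{def 3}), it suffices to exhibit, for each even $m$, a choice of parameters $(k,s)$ for which (\ref{eq 4}) is a permutation whose algebraic degree differs from that of every $f_j$ in Theorem \ref{theorem 8}. Here I would use the standard fact that a permutation of $\mathbb{F}_{2^{2m}}$ has reduced degree at most $2^{2m}-2$, so no nonzero term can carry the exponent $2^{2m}-1$; as $2^{2m}-1$ is the unique integer in $[0,2^{2m}-1]$ of Hamming weight $2m$, every permutation of $\mathbb{F}_{2^{2m}}$ has algebraic degree at most $2m-1$. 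My target is to make (\ref{eq 4}) attain this maximal value $2m-1$.

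For the concrete construction I would exploit the description behind Theorem \ref{theorem 1}: the nine exponents of (\ref{eq 4}) are exactly $(2^{k}+2s+1)+(2^{m}-1)e$ as $e$ ranges over the exponents of $h$, and the contribution of the constant term of $h$ (the case $e=0$) is $d_{9}=2^{k}+2s+1$. I would fix an index $i\ge 1$ and choose $s$ so that $2^{k}+2s+1=2^{2m}-1-2^{i}$; this is possible because $s$ is a free integer and the right-hand side is odd. Then $d_{9}=2^{2m}-1-2^{i}$ already lies in $[0,2^{2m}-2]$ and has Hamming weight $2m-1$. It remains to check the hypotheses of Theorem \ref{theorem 1}: I would take $k=1$, so $v_{2}(k)=0\le v_{2}(m)$ (recall $m$ is even), and then $\gcd(2^{k}+2s+1,2^{m}-1)=1$ comes for free, since $2^{k}+2s+1\equiv -2^{i}\pmod{2^{m}-1}$ and $2^{m}-1$ is odd. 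Hence (\ref{eq 4}) is a permutation; and for a suitable choice of $i$ the exponent $d_{9}$ is distinct from the other eight (equivalently $0\not\equiv e\pmod{2^{m}+1}$ for the remaining exponents $e$ of $h$), so the term $x^{d_{9}}$ is not cancelled and the algebraic degree equals exactly $2m-1$.

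The second main step is to compute the algebraic degrees of the known polynomials $f_{1},\dots,f_{18}$ and show they are strictly smaller. I would reduce each exponent modulo $2^{2m}-1$ and read off its Hamming weight: the pentanomials and most trinomials give degree $2$ or $3$, while the largest value $m+1$ occurs for $f_{4},f_{5},f_{9},f_{10}$ and, after reducing the residue-form exponents, for $f_{6},f_{7},f_{8}$. For every even $m\ge 4$ one has $m+1<2m-1$, so by the degree invariant the polynomial built above is EA-inequivalent to each $f_{j}$, which proves the theorem in that range.

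The hard part is twofold. First, the exponents of $f_{6},f_{7},f_{8}$ are given in ``fractional'' form $s(2^{m}-1)+1$ with $s$ a residue modulo $2^{m}+1$, so bounding their weights requires a careful reduction modulo $2^{2m}-1$; this bookkeeping is the most error-prone part of the degree computation. Second, and more seriously, the degree argument collapses at $m=2$, where $2m-1=3=m+1$ and several known polynomials (for instance $f_{4}=x+x^{4}+x^{7}$ over $\mathbb{F}_{16}$) already attain the maximal degree $3$; there the algebraic degree cannot separate them from our polynomial. I would therefore handle $m=2$ separately, either by a direct machine-assisted check that the specific polynomial obtained above is EA-inequivalent to each degree-$3$ member of Theorem \ref{theorem 8}, or by invoking a finer EA-invariant; pinning down a clean argument for this boundary case is where I expect the real difficulty to lie.
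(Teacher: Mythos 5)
Your overall strategy is exactly the paper's: use the invariance of algebraic degree under EA-equivalence, exhibit a member of the family (\ref{eq 4}) of maximal degree $2m-1$ (the paper takes $k=1$, $s=2^{2m-1}-3$, so that $2^{k}+2s+1=2^{2m}-3$ has Hamming weight $2m-1$), and check that every $f_{j}$ in Theorem \ref{theorem 8} has degree at most $m+2<2m-1$ once $m>2$. Your weight bound $wt_{2}\bigl(s(2^{m}-1)+1\bigr)\le m+1$ for $f_{6},f_{7},f_{8}$ is also the computation the paper carries out. Up to the choice of parameters, this half of your argument matches the paper.

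The genuine gap is the case $m=2$, which you correctly identify but do not close: the theorem is asserted for \emph{all} even positive $m$, and at $m=2$ the maximal degree $2m-1=3$ coincides with the degree of several known polynomials, so no maximal-degree member of (\ref{eq 4}) can be separated from them by degree alone. Deferring to a machine check or an unspecified finer invariant leaves the statement unproved there. The paper's resolution stays entirely inside the degree argument but picks a \emph{different} member of the family: for $m=2$ take $k=1$, $s=1$ in (\ref{eq 4}). Then six of the nine exponents coincide in pairs and cancel in characteristic $2$, leaving $x^{20}+x^{17}+x^{5}$, which over $\mathbb{F}_{16}$ reduces (via $x^{16}=x$) to $x^{5}+x^{2}+x^{5}=x^{2}$, a linearized permutation of algebraic degree $1$. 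Every $f_{j}$ in Theorem \ref{theorem 8} has algebraic degree at least $2$ (for $f_{6},f_{7},f_{8}$ this uses $s\neq t$, so at least one exponent $s(2^{m}-1)+1$ has weight $\ge 2$), so this degree-$1$ member is EA-inequivalent to all of them. Note the theorem only requires \emph{some} member of the class to be EA-inequivalent to the known polynomials for each $m$, so switching parameters between $m=2$ and $m>2$ is legitimate; adopting this second parameter choice would complete your proof.
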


\begin{proof}
	According to the property that EA-equivalent nonconstant functions have same algebraic degree, two polynomials are EA-inequivalent if they have different algebraic degree.	
	
	For the class of permutation polynomials (\ref{eq 4}), let $ k=1,s=2^{2m-1}-3 $, we can get the polynomial $ g_{1}(x)=x^{2^{2m}-2^{m+1}-1}+x^{2^{2m-1}+2^{m-1}-3}+x^{2^{m+1}+2^{m}-5}+x^{2^{2m}-2^{m+2}+1}+x^{2^{2m}-2^{m+2}-2^{m}+2}+x^{2^{2m-1}-2^{m+1}+2^{m-1}-1}+x^{2^{2m-1}-2^{m+1}-2^{m-1}}+x^{2^{m}-3}+x^{2^{2m}-3} $, and the algebraic degree of $ g_{1}(x) $ is $ 2m-1 $. When $ m=2 $, let $ k=1,s=1 $, we can obtain the polynomial $ g_{2}(x)=x^{2} $, and the algebraic degree of $ g_{2}(x) $ is 1.
	
	The algebraic degree of $ f_{3}(x) $, $ f_{17}(x) $ and $ f_{18}(x) $ is less than or equal to 2. Since $ m \geq 2 $, $ 2m-1 \geq 3 $, therefore, $ g_{1}(x) $ has different algebraic degree from $ f_{3}(x) $, $ f_{17}(x) $ and $ f_{18}(x) $.
	
	The algebraic degree of $ f_{1}(x) $, $ f_{11}(x) $, $ f_{12}(x) $, $ f_{13}(x) $, $ f_{15}(x) $ and $ f_{16}(x) $ is equal to 3. When $ m > 2 $, $ 2m-1 > 3 $, therefore, $ g_{1}(x) $ has different algebraic degree from $ f_{1}(x) $, $ f_{11}(x) $, $ f_{12}(x) $, $ f_{13}(x) $, $ f_{15}(x) $ and $ f_{16}(x) $. When $ m = 2 $, $ g_{2}(x) $ has different algebraic degree from $ f_{1}(x) $, $ f_{11}(x) $, $ f_{12}(x) $, $ f_{13}(x) $, $ f_{15}(x) $ and $ f_{16}(x) $.
	
	The algebraic degree of $ f_{2}(x) $, $ f_{4}(x) $, $ f_{5}(x) $, $ f_{9}(x) $ and $ f_{10}(x) $ is equal to $ m+1 $. When $ m > 2 $, $ 2m-1 > m+1 $, therefore, $ g_{1}(x) $ has different algebraic degree from $ f_{2}(x) $, $ f_{4}(x) $, $ f_{5}(x) $, $ f_{9}(x) $ and $ f_{10}(x) $. When $ m = 2 $, $ m+1=3 $, therefore, $ g_{2}(x) $ has different algebraic degree from $ f_{2}(x) $, $ f_{4}(x) $, $ f_{5}(x) $, $ f_{9}(x) $ and $ f_{10}(x) $.
	
	When $ m \ge 2 $, the algebraic degree of $ f_{14}(x) $ is $ m+2 $, and $ 2m-1 \neq m+2 $ since $ m $ is even, therefore, $ g_{1}(x) $ has different algebraic degree from $ f_{14}(x) $. When $ m = 2 $, the algebraic degree of $ f_{14}(x) $ is 3, therefore, $ g_{2}(x) $ has different algebraic degree from $ f_{14}(x) $.
	
	$ f_{6}(x) $, $ f_{7}(x) $ and $ f_{8}(x) $ have the same form. For all $ 1 \leq s \leq 2^{m} $, $ s $ can be written as $ \sum_{i=j+1}^{m-1}s_{i}2^{i}+2^{j} $, where $ i,j \in \mathbb{N} $, $ s_{i} \in \{0,1\} $ and $ j $ is the minimum $ i $ such that $ s_{i} \neq 0 $. Thus, $ s(2^{m}-1)=\sum_{i=j+1}^{m-1}s_{i}2^{i+m}+2^{j}(2^{m}-1-\sum_{i=j+1}^{m-1}s_{i}2^{i-j}) $. Denoting the Hamming weight by $ wt_{2}(\cdot) $, it is obvious that $ wt_{2}(\sum_{i=j+1}^{m-1}s_{i}2^{i+m})=wt_{2}(s)-1 $ and $ wt_{2}(2^{m}-1-\sum_{i=j+1}^{m-1}s_{i}2^{i-j})=m-(wt_{2}(s)-1) $. Since $ 2^{j+m+1} > 2^{j+m} $, $ wt_{2}(s(2^{m}-1))=wt_{2}(s)-1+m-(wt_{2}(s)-1)=m $. Thus, $ wt_{2}(s(2^{m}-1)+1) \leq m+1 $. That is, the algebraic degree of $ f_{6}(x) $, $ f_{7}(x) $ and $ f_{8}(x) $ is less than or equal to $ m+1 $. When $ m > 2 $, $ 2m-1 > m+1 $, therefore, $ g_{1}(x) $ has different algebraic degree from $ f_{6}(x) $, $ f_{7}(x) $ and $ f_{8}(x) $. When $ m = 2 $, the Hamming weight of $ s(2^{m}-1)+1 $ is equal to 1 only for $ s=1 $, thus, the algebraic degree of $ f_{6}(x) $, $ f_{7}(x) $ and $ f_{8}(x) $ is not equal to 1 since $ s \neq t $, therefore, $ g_{2}(x) $ has different algebraic degree from $ f_{6}(x) $, $ f_{7}(x) $ and $ f_{8}(x) $.
	
	In conclusion, in the class of permutation polynomials (\ref{eq 4}), there exists a  permutation polynomial which is EA-inequivalent to all the known permutation polynomials in Theorem \ref{theorem 8} for all even positive integer $ m $.
	
	The proof is complete.\qed
\end{proof}

Similarly, for the class of permutation polynomials (\ref{eq 5}), let $ k=1,s=-2,u=0,i=1 $, we can get the polynomial $ g_{3}(x)=x^{2^{2m}-2}+x^{2^{2m}-2^{m+1}}+x^{2^{2m}-2^{m+2}+2}+x^{2^{2m}-3\cdot2^{m}+1}+x^{3\cdot2^{m}-4} $, and the algebraic degree of $ g_{3}(x) $ is $ 2m-1 $. When $ m=2 $, let $ k=1,s=-1,u=4,i=1 $, we can obtain the polynomial $ g_{4}(x)=x^{4} $, and the algebraic degree of $ g_{4}(x) $ is 1. Thus, we can prove that, in the class of permutation polynomials (\ref{eq 5}), there exists a permutation polynomial which is  EA-inequivalent to all the known permutation polynomials in Theorem \ref{theorem 8} for all even positive integer $ m $ by using the same method in the proof of Theorem \ref{theorem 7}.

For the class of permutation polynomials (\ref{eq 17}), let $ k=1,s=2^{2m-1}-3 $, we can get the polynomial $ g_{5}(x)=x^{2^{2m}-2^{m+2}+2^{m}}+x^{2^{2m-1}-2^{m-1}-2}+x^{2^{m+1}-4}+x^{2^{2m}-2^{m+2}+1}+x^{2^{2m}-2^{m+2}-2^{m}+2}+x^{2^{2m-1}-2^{m+1}-2^{m-1}}+x^{2^{2m-1}-2^{m+1}+2^{m-1}-1}+x^{2^{m}-3}+x^{2^{2m}-3} $, and the algebraic degree of $ g_{5}(x) $ is $ 2m-1 $. When $ m=2 $, let $ k=3,s=1 $, we can obtain the polynomial $ g_{6}(x)=x^{8} $, and the algebraic degree of $ g_{6}(x) $ is 1. Thus, we can prove that, in the class of permutation polynomials (\ref{eq 17}), there exists a permutation polynomial which is EA-inequivalent to all the known permutation polynomials in Theorem \ref{theorem 8} for all even positive integer $ m $ by using the same method in the proof of Theorem \ref{theorem 7}.

For the class of permutation polynomials (\ref{eq 18}), let $ k=1,s=-2,u=0,i=1 $, we can get the polynomial $ g_{7}(x)=x^{2^{m}-2}+x^{2^{2m}-2^{m}-1}+x^{2^{2m}-2^{m+2}+2}+x^{2^{2m}-3\cdot2^{m}+1}+x^{2^{m+1}-3} $, and the algebraic degree of $ g_{7}(x) $ is $ 2m-1 $. When $ m=2 $, let $ k=3,s=-1,u=2,i=1 $, we can obtain the polynomial $ g_{8}(x)=x^{4} $, and the algebraic degree of $ g_{8}(x) $ is 1. Thus, we can prove that, in the class of permutation polynomials (\ref{eq 18}), there exists a permutation polynomial which is EA-inequivalent to all the known permutation polynomials in Theorem \ref{theorem 8} for all even positive integer $ m $ by using the same method in the proof of Theorem \ref{theorem 7}.

For the class of permutation polynomials (\ref{eq 35}), let $ k=2m-1 $, we can get the polynomial $ g_{9}(x)=\sum_{j=1}^{2^{2m-1}}x^{2^{m-1}-j\cdot2^{m}+j-1} $ with $ j \equiv 0,2 \pmod 3 $, and the algebraic degree of $ g_{9}(x) $ is $ 2m-1 $. When $ m=2 $, let $ k=1 $, we can obtain the polynomial $ g_{10}(x)=x $, and the algebraic degree of $ g_{10}(x) $ is 1. Thus, we can prove that, in the class of permutation polynomials (\ref{eq 35}), there exists a permutation polynomial which is EA-inequivalent to all the known permutation polynomials in Theorem \ref{theorem 8} for all even positive integer $ m $ by using the same method in the proof of Theorem \ref{theorem 7}.

For the class of permutation polynomials (\ref{eq 36}), let $ k=2,u=-2,i=1 $, we can get the polynomial $ g_{11}(x)=x^{2^{2m}-2^{m+1}}+x^{2^{2m}-2}+x^{2^{m}-2} $, and the algebraic degree of $ g_{11}(x) $ is $ 2m-1 $. When $ m=2 $, let $ k=1,u=0,i=1 $, we can obtain the polynomial $ g_{12}(x)=x $, and the algebraic degree of $ g_{12}(x) $ is 1. Thus, we can prove that, in the class of permutation polynomials (\ref{eq 36}), there exists a permutation polynomial which is EA-inequivalent to all the known permutation polynomials in Theorem \ref{theorem 8} for all even positive integer $ m $ by using the same method in the proof of Theorem \ref{theorem 7}.

So far, for all even positive integer $ m $, we have proved that, in all classes of permutation polynomials constructed in Section \ref{sec:3}, there exists a permutation polynomial which is EA-inequivalent to the known classes of permutation polynomials in Theorem \ref{theorem 8}.

Furthermore, from the above proof, it is obvious that, in all new permutation polynomials, there exists a permutation polynomial with algebraic degree $ 2m-1 $, which is the maximum algebraic degree of permutation polynomials over $ \mathbb{F}_{2^{2m}} $.

\section{Conclusion}\label{sec:5}	
In this paper, from new fractional permutation polynomials over $ U $, six new classes of permutation polynomials over $ \mathbb{F}_{2^{2m}} $ with coefficients 1 have been constructed. Three new classes of permutation polynomials are the special cases of other three new classes of permutation polynomials. Furthermore, we have proved that, in all new permutation polynomials, there exists a permutation polynomial which is EA-inequivalent to known permutation polynomials for all even positive integer $ m $.

\subsubsection{Acknowledgements} This work was supported by Beijing Natural Science Foundation (No. 4202037), NSF of China with contract (No.61972018).


\begin{thebibliography}{8}
\bibitem{Ref1}
Ding, C.: Optimal Ternary Cyclic Codes From Monomials. IEEE Transactions on Information Theory \textbf{59}, 5898--5904 (2013)

\bibitem{Ref2}
Ding, C.: A family of skew Hadamard difference sets. Journal of Combinatorial Theory, Series A \textbf{113}, 1526--1535 (2006)

\bibitem{Ref3}
Gupta, R.: Some new classes of permutation trinomials over finite fields with even characteristic. Finite Fields and Their Applications \textbf{41}, 89--96 (2016)

\bibitem{Ref4}
Laigle-Chapuy, Y.: Permutation polynomials and applications to coding theory. Finite Fields and Their Applications \textbf{13}, 58--70 (2007)

\bibitem{Ref5}
Li, K.: New classes of permutation binomials and permutation trinomials over finite fields. Finite Fields and Their Applications \textbf{43}, 69--85 (2017)

\bibitem{Ref6}
Li, N.: New permutation trinomials from Niho exponents over finite fields with even characteristic. Cryptography and Communications \textbf{11}, 129--136 (2019)

\bibitem{Ref7}
Li, N.: Several classes of permutation trinomials from Niho exponents. Cryptography and Communications \textbf{9}, 693--705 (2017)

\bibitem{Ref8}
Rivest, R.L.: A method for obtaining digital signatures and public-key cryptosystems. Commun. ACM \textbf{21}, 120--126 (1978)

\bibitem{Ref9}
Schwenk, J.: Public key encryption and digital signatures based on permutation polynomials. Electronics Letters \textbf{34}, 759--760 (1998)

\bibitem{Ref10}
Tu, Z.: A class of binomial permutation polynomials. \url{https://arxiv.org/pdf/1310.0337v1.pdf}. Last accessed 28
Sep 2013

\bibitem{Ref11}
Tu, Z.: A class of new permutation trinomials. Finite Fields and Their Applications \textbf{50}, 178--195 (2018)

\bibitem{Ref12}
Tu, Z.: Two classes of permutation trinomials with Niho exponents. Finite Fields and Their Applications \textbf{53}, 99--112 (2018)

\bibitem{Ref13}
Xu, G.: Some permutation pentanomials over finite fields with even characteristic. Finite Fields and Their Applications \textbf{49}, 212--226 (2018)

\bibitem{Ref14}
Zha, Z.: Further results on permutation trinomials over finite fields with even characteristic. Finite Fields and Their Applications \textbf{45}, 43--52 (2017)

\bibitem{Ref15}
Zha, Z.: New families of perfect nonlinear polynomial functions. Journal of Algebra \textbf{322}, 3912--3918 (2009)

\bibitem{Ref16}
Zieve, M.: On some permutation polynomials over $ \mathbb{F}_{q} $ of the form $ x^{r}h(x^{(q-1)/d}) $. Proceedings of the American Mathematical Society \textbf{137}, 2209--2216 (2009)

\bibitem{Ref17}
Zieve, M.: Some families of permutation polynomials over finite fields. International Journal of Number Theory \textbf{4}, 851--857 (2008)

\bibitem{Ref18}
Wan, D.: Permutation polynomials of the form $ x^{r}f(x^{(q-1)/d}) $ and their group structure. Monatshefte Math. \textbf{112}, 149--163 (1991)

\bibitem{Ref19}
Xu, X.: Constructions of complete permutation polynomials. Designs, Codes and Cryptography \textbf{86}, 2869--2892 (2018)

\bibitem{Ref20}
Guo, H.: A new method of construction of permutation trinomials with coefficients 1. \url{https://arxiv.org/pdf/2112.14547.pdf}. Last accessed 29
Apr 2022

\end{thebibliography}
\end{document}